\pgfplotsset{compat=1.17}
\newcommand\KK{{\mathbb K}}
\newcommand\QQ{{\mathbb Q}}
\newcommand\RR{{\mathbb R}}
\newcommand\Sph{{\mathbb S}}
\newcommand\ZZ{{\mathbb Z}}
\newcommand\cF{{\mathcal F}}
\newcommand\cL{{\mathcal L}}
\newcommand\cM{{\mathcal M}}
\newcommand\cB{{\mathcal B}}
\newcommand\biggSetOf[2]{\biggl\{#1 \biggm| #2\biggr\}}
\DeclareMathOperator\conv{conv}
\DeclareMathOperator\cork{cork}
\DeclareMathOperator\Sym{Sym}
\DeclareMathOperator\Aut{Aut}
\DeclareMathOperator\rk{rk}
\DeclareMathOperator\Gr{Gr}
\DeclareMathOperator\TGr{TGr}
\DeclareMathOperator\Dr{Dr}
\DeclareMathOperator\FlDr{FlDr}
\DeclareMathOperator\val{val}
\newcommand\puiseuxseries[2]{{#1}\{\hskip-.25em\{#2\}\hskip-.25em\}}
\newtheorem{theorem}{Theorem}
\newtheorem{lemma}[theorem]{Lemma}
\newtheorem{proposition}[theorem]{Proposition}
\theoremstyle{remark}
\newtheorem{example}[theorem]{Example}
\newtheorem{question}[theorem]{Question}
\newcommand\polymake{\texttt{polymake}\xspace}
\newcommand\TOPCOM{\texttt{TOPCOM}\xspace}
\newcommand\OSCAR{\texttt{OSCAR}\xspace}
\newcommand\mptopcom{\texttt{mptopcom}\xspace}
\newcommand\Gfan{\texttt{Gfan}\xspace}
\newcommand \acknowledgements{\section*{Acknowledgements}}
\title{Generalized permutahedra and \\ positive flag Dressians}
\author{Michael Joswig \and Georg Loho \and Dante Luber \and Jorge Alberto Olarte} 
\address[Michael Joswig]{
	Technische Universität Berlin,
	Chair of Discrete Mathematics/Geometry \\
	Max-Planck Institute for Mathematics in the Sciences, Leipzig \\
	\texttt{joswig@math.tu-berlin.de}
}
\address[Dante Luber \and Jorge Alberto Olarte]{
	Technische Universität Berlin,
	Chair of Discrete Mathematics/Geometry \\
	\texttt{$\{$luber,olarte$\}$@math.tu-berlin.de}	
}
\address[Georg Loho]{
  University of Twente,
  Department of Applied Mathematics \\
  \texttt{g.loho@utwente.nl}
  }
\thanks{%
	Supported by the Deutsche Forschungsgemeinschaft (DFG, German Research Foundation) \enquote{The Berlin Mathematics Research Center MATH$^+$} (EXC-2046/1, project ID 390685689).
	MJ has further been supported by \enquote{Symbolic Tools in Mathematics and their Application} (TRR 195, project-ID 286237555).
	DL was supported by \enquote{Facets of Complexity} (GRK 2434, project-ID 385256563).
      }
\subjclass[2020]{
	% TENTATIVE
	52B12, % Special polytopes (linear programming, centrally symmetric, etc.)
	14T15, % Combinatorial aspects of tropical varieties
        05B35, % Combinatorial aspects of matroids and geometric lattices 
        05E14 %Combinatorial aspects of algebraic geometry
}  % 
\begin{document}

\begin{abstract}
  We study valuated matroids, their tropical incidence relations, flag matroids and total positivity.
  This leads to a characterization of \emph{permutahedral subdivisions}, namely subdivisions of regular permutahedra into generalized permutahedra. 
  Further, we get a characterization of those subdivisions arising from \emph{positive} valuated flag matroids.
\end{abstract}

\maketitle

\section{Introduction}

A  \emph{generalized permutahedron} in $\RR^n$ is a convex polytope such that all its edges are parallel to $e_i-e_j$ for $i\neq j$. 
Here $e_1,e_2,\dots,e_n$ denote the standard basis vectors of $\RR^n$.
Generalized permutahedra were introduced by Postnikov \cite{Postnikov:2009}, but they were known before under different names, most notably in the context of submodular function optimization~\cite{Fujishige:2005} and discrete convex analysis~\cite{Murota:2003}. 
Relevant examples of generalized permutahedra include the regular permutahedra, Bruhat interval polytopes, hypersimplices and more general matroid polytopes.
In this way the topic is connected to group and representation theory, algebraic and tropical geometry, optimization and beyond.

Here we continue the combinatorial study of polyhedral subdivisions of generalized permutahedra into cells which are again generalized permutahedra \cite{ArdilaDoker:2013,ArdilaCastilloEurpostnikob:2020,BrandtEurZhang:2021}.
We call these \emph{permutahedral subdivisions}.
An important special case are the tropical linear spaces, which amount to regular subdivisions of matroid polytopes with matroidal cells; see \cite[\S4.4]{Tropical+Book} and \cite[\S10.5]{ETC}.
Our first main contribution (\Cref{thm:regular-2-skeleton}) is a description of regular permutahedral subdivisions of the regular permutahedra in terms of conditions on the $2$-skeleton.
This result is an adaptation of the characterization of the uniform tropical linear spaces via the $3$-term Plücker relations.

One motivation for research in this direction comes from the wish to understand flags of tropical linear spaces and flag matroids.
As a new tool, we prove a characterization of pairs of valuated matroids in terms of tropical incidence relations (\Cref{thm:1step}).
Combined with~\cite{BrandtEurZhang:2021}, we gain new insights in the flag Dressian, the space of incident valuated matroids.
Speyer and Williams~\cite{SpeyerWilliams:2021} and, independently, Arkani-Hamed, Lam, and Spradlin \cite{ArkaniHamedLamSpradlin:2021} recently showed that the positive Dressian equals the positive tropical Grassmannian.
Our second main result (\Cref{thm:positivity}) is related, as it shows that the valuated flags which can be realized by a totally positive flag of linear spaces correspond to the permutahedral subdivisions whose cells are Bruhat interval polytopes.

\section{The geometry of the regular permutahedron}
\label{sec:geometry-regular-permutahedron}

We denote the symmetric group of degree $n$ as $\Sym(n)$.
The (regular) permutahedron $\Pi_n\subseteq \RR^n$ is defined as the convex hull of the points $(\sigma(1),\sigma(2),\dots,\sigma(n))$, where $\sigma$ ranges over $\Sym(n)$. 
These points form the $n!$ vertices of $\Pi_n$, and $\Sym(n)$ acts on this set, e.g., by multiplication on the right.
We have $\dim\Pi_n=n-1$.
Throughout, we identify $\Sym(n)$ with the vertices of $\Pi_n$. 

The face structure of $\Pi_n$ is well-known, and we will use the following description.
A \emph{flag} $(F_1,\dots,F_k)$ in $[n]$ with $k$ \emph{constituents} is a strictly increasing sequence $F_{1}\subset \dots \subset F_{k}$ of non-empty subsets of $[n]$; a flag is \emph{full} if it has $n$ constituents.
We say that a flag \emph{extends} a pair $(U,V)$ of subsets $U \subset V \subseteq [n]$ if there are $i,j \in [k]$ with $U = F_i$ and $V = F_j$.
We set $e_A:= \sum_{i\in A} e_i$.
For the flag $\cF=(F_1,\dots,F_k)$ we can pick real numbers $\lambda_i$ with $\lambda_1 \gg \dots \gg \lambda_k> 0$ to obtain the vector 
\[
\lambda_\cF := \lambda_1 e_{F_1} +\lambda_2 e_{F_2\backslash F_1} +\dots +\lambda_ke_{F_k\backslash F_{k-1}} \, .
\]
The following summarizes results from \cite[Section 3.3(d)]{Fujishige:2005} and \cite[Proposition 1.3]{BilleraSarangarajan:1996}.
%A \emph{jump} of $\cF$ is a pair of subsequent constituents $(F_{k-1},F_k)$ such that $F_k\setminus F_{k-1}$ contains more than one element; here we use the convention $F_0=\emptyset$ and $F_n=[n]$.
\begin{proposition} \label{faces-permutahedron}
	The map which sends the flag $\cF=(F_1,\dots,F_k)$ in $[n]$ to the non-empty proper face of $\Pi_n$ which maximizes $\lambda_\cF$ is a bijection.
	In particular,
	\begin{compactenum}
		\item the flag $\cF$ with $k$ constituents corresponds to a face of codimension $k$, which is affinely equivalent with $\Pi_{|F_1|} \times \Pi_{|F_2 \setminus F_1|} \times \dots \times \Pi_{|F_k \setminus F_{k-1}|} \times \Pi_{|[n] \setminus F_k|}$;
		\item the facets correspond to non-empty proper subsets of $[n]$;
		\item the vertices correspond to the full flags;
		\item the edges correspond to pairs of full flags which differ in exactly one constituent;
		\item each $2$-face, where $k=n-3$, is either a hexagon (if there exists $i$ with $|F_{i+1} \setminus F_i| = 3$), or it is a square (if there exist distinct $i,j$ with $|F_{i+1} \setminus F_i| = 2$ and $|F_{j+1} \setminus F_j| = 2$).
		\item for two sets $A,B \subset [n]$ with $|A| = |B|$ and $|A \triangle B| = 2$, the edges defined by the flags extending $(A \cap B, A \cup B)$ span a face isomorphic with $\Pi_{|A \cap B|} \times \Pi_{2} \times \Pi_{n - |A \cup B|}$.
	\end{compactenum}
\end{proposition}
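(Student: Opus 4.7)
The plan is to use the rearrangement inequality to identify, for any linear functional $c\in\RR^n$, the face of $\Pi_n$ which maximizes $c$. If $c$ has ordered level-set decomposition $S_1,S_2,\dots,S_{m}$ with associated values $\mu_1>\mu_2>\dots>\mu_m$, then the maximum of $\langle c,\sigma\rangle$ over $\sigma\in\Sym(n)$ is attained precisely by those permutations that place the largest $|S_1|$ of the values $\{1,\dots,n\}$ on the coordinates indexed by $S_1$, the next $|S_2|$ on $S_2$, and so on. These permutations form the vertex set of an affine copy of the product $\Pi_{|S_1|}\times\Pi_{|S_2|}\times\dots\times\Pi_{|S_m|}$.

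Next, I would observe that the functional $\lambda_\cF$ associated to the flag $\cF=(F_1,\dots,F_k)$ with $\lambda_1\gg\lambda_2\gg\dots\gg\lambda_k>0$ has exactly the ordered level-set decomposition $F_1,\,F_2\setminus F_1,\,\dots,\,F_k\setminus F_{k-1},\,[n]\setminus F_k$ (dropping the final block when $F_k=[n]$), so the map $\cF\mapsto\mathop{\arg\max}\lambda_\cF$ is well-defined and independent of the particular choice of the $\lambda_i$. Part (i) then follows from the first step together with a dimension count: the product of permutahedra displayed there has dimension $n-k-1$, hence codimension $k$ in $\Pi_n$.

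Bijectivity is obtained by two observations: every non-empty proper face of $\Pi_n$ is the max-face of some $c\in\RR^n$, and two linear functionals with different ordered level-set decompositions produce different max-faces (their vertex sets differ). Passing from an ordered set partition $S_1,\dots,S_m$ of $[n]$ back to a flag via the partial unions $F_j:=S_1\cup\dots\cup S_j$ supplies the inverse map. The remaining items are direct specializations: (ii) is $k=1$; (iii) forces every block to be a singleton, so the flag is full; (iv) identifies edges with codimension-$(n-2)$ faces, whose only non-singleton block has size $2$, so the two associated full flags differ in one constituent; (v) classifies $2$-dimensional faces, where the block multiset must either contain a single block of size $3$ (giving the hexagonal factor $\Pi_3$) or two blocks of size $2$ (giving two segments $\Pi_2$); and (vi) follows by noting that the flags extending $(A\cap B,A\cup B)$ are exactly those whose ordered partition has $A\cap B$ as an initial block and $A\triangle B$ as the next block of size two, producing the claimed product.

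The main obstacle is bookkeeping rather than conceptual: verifying that the correspondence between ordered set partitions and flags is a genuine bijection, and that items (iv)--(vi) correspond precisely to the combinatorial patterns claimed (in particular, that no pair of distinct full flags collapses to the same edge). Once the rearrangement-inequality step is established, the rest is a routine case analysis.
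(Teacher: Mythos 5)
The paper does not prove this proposition; it just cites Fujishige [Section~3.3(d)] and Billera--Sarangarajan [Proposition~1.3], so there is no in-paper argument to compare yours against. Your self-contained derivation via the rearrangement inequality is the standard route to this result and is correct: computing the max-face $\argmax_{\Pi_n} c$ from the ordered level-set partition of $c$, observing that $\lambda_\cF$ realizes the ordered partition $\bigl(F_1,\,F_2\setminus F_1,\,\dots,\,F_k\setminus F_{k-1},\,[n]\setminus F_k\bigr)$, and recovering $\cF$ via partial unions of blocks gives the bijection; the dimension count for item~(i) and the specializations (ii)--(vi) are then straightforward combinatorics of ordered set partitions of $[n]$. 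One convention point is worth making explicit, as it affects the paper's wording more than your proof: the paper allows flag constituents to be arbitrary non-empty subsets, but if $F_k=[n]$ is permitted then $\lambda_{(F_1,\dots,F_{k-1},[n])}$ and $\lambda_{(F_1,\dots,F_{k-1})}$ differ only by a multiple of $(1,\dots,1)$ and thus select the same face, so the map would not be injective and the codimension in~(i) would read one too high. The intended convention (which your argument tacitly uses when you ``drop the final block'') is that the constituents are non-empty \emph{proper} subsets, with a full flag having $n-1$ of them. Item~(vi) is the only place your write-up is a little compressed --- ``$A\cap B$ as an initial block'' should be read as ``the union of the first several blocks equals $A\cap B$'', and one should also note that these edges already contain every vertex of the codimension-2 face $(A\cap B,A\cup B)$, which is what justifies ``span'' --- but the underlying argument is sound.
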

Here, '$\triangle$' denotes symmetric difference. 
Notice that $\Pi_n$ also arises as a secondary polytope of the prism over an $(n{-}1)$-simplex; see \cite[Theorem 6.2.6]{Triangulations}.
This leads to another way to describe the face lattice.
% number of $k$-faces = OEIS:A019538
The result below is known.
We indicate a proof for the sake of completeness.

\begin{proposition}\label{prop:group}
	For $n\geq 4$ the group of combinatorial automorphisms $\Aut(\Pi_n)$ is isomorphic to the direct product $\Sym(n)\times\ZZ_2$; its order equals $2\cdot n!$.
	Furthermore, $\Aut(\Pi_3)$ is the dihedral group of order $12$, and $\Aut(\Pi_2)$ is the cyclic group of order two.
	In all these cases, each combinatorial automorphism is linearly induced.
\end{proposition}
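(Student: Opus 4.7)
The plan is to exhibit $\Sym(n) \times \ZZ_2$ explicitly as linear automorphisms of $\Pi_n$ and then match this with an upper bound on $|\Aut(\Pi_n)|$ coming from the $2$-face structure. For the lower bound, coordinate permutation gives a faithful linear action of $\Sym(n)$ on $\Pi_n$, and the central symmetry $\iota\colon x \mapsto (n+1)\1 - x$ is a linear involution preserving $\Pi_n$ which sends the vertex $\sigma$ to $w_0 \sigma$, where $w_0$ is the reverse permutation. A short computation using that coordinate permutations fix $\1$ shows that these two actions commute. For $n=2$, $\iota$ coincides with the coordinate swap and the generated subgroup is $\ZZ_2$; for $n \ge 3$, checking on a vertex such as $(2,1,3,4,\dots,n)$ shows that $\iota$ is not equal to any coordinate permutation, so the generated subgroup is a direct product of order $2 \cdot n!$.

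For the upper bound, I read off the local Coxeter-type structure from \Cref{faces-permutahedron}. Identifying vertices with $\Sym(n)$ and fixing the identity vertex $e$, the $n-1$ edges at $e$ correspond to the simple transpositions $s_1,\dots,s_{n-1}$. Item~(v) of the proposition gives that two such edges lie in a common hexagonal $2$-face iff their indices differ by $1$, and in a common square otherwise. The graph on $\{s_1,\dots,s_{n-1}\}$ whose edges record hexagonal adjacency is therefore the Coxeter diagram $A_{n-1}$, a path whose automorphism group has order $2$ for $n \ge 3$ and order $1$ for $n = 2$. Any combinatorial automorphism fixing $e$ thus induces an automorphism of this path, giving at most $|\Aut(A_{n-1})|$ choices for its local action.

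The main obstacle is to show that such a stabilizing automorphism $\phi$ is already determined by its action on the edges incident to $e$, so that $|\Aut(\Pi_n)| \le n! \cdot |\Aut(A_{n-1})|$. I would propagate $\phi$ along the $1$-skeleton, which by item~(iv) is the Cayley graph of $\Sym(n)$ with generators $s_1,\dots,s_{n-1}$: knowing $\phi$ on an edge $(u,v)$ pins down $\phi$ on every other edge at $v$, since each such edge lies in a unique hexagonal or square $2$-face together with $(u,v)$. The consistency of this propagation amounts to the statement that the two families of $2$-faces witness the braid relations $s_i s_{i+1} s_i = s_{i+1} s_i s_{i+1}$ and the commutations $s_i s_j = s_j s_i$ for $|i-j| \ge 2$, which together present $\Sym(n)$ as a Coxeter group; since any two reduced expressions for a vertex differ by such moves, $\phi$ is well defined on every vertex by induction on the distance from $e$. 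Matching this upper bound with the order of the explicit linear subgroup closes the argument, so every combinatorial automorphism is linearly induced; the three cases $n \ge 4$, $n=3$ (where $\Sym(3) \times \ZZ_2 \cong D_6$), and $n=2$ then follow from the explicit description.
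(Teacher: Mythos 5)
Your proof is correct, and it takes a genuinely different route from the paper's.

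For the lower bound, the paper uses the reflection
\(
s(x) = x - \tfrac{x_1-x_2-x_{n-1}+x_n}{2}(1,-1,0,\dots,0,-1,1)
\)
fixing the vertex $(1,\dots,n)$; this reflection commutes with $\Sym(n)$ only for $n\geq 4$, which is why the paper treats $n=3$ as a semidirect product and states the result differently for that case. Your choice of the central symmetry $\iota\colon x\mapsto (n+1)\1 - x$ is cleaner: since coordinate permutations act on the right and $\iota$ corresponds to left multiplication by the longest element $w_0$, the commutation is immediate and uniform for all $n\geq 3$. (Note that $\iota\circ w_0$, where $w_0$ also denotes the coordinate reversal, fixes $(1,\dots,n)$ and therefore equals the paper's $s$, so the two generating sets give the same subgroup — they just decompose it differently; in particular $\Sym(3)\times\ZZ_2\cong D_{12}$, the dihedral group of order $12$, so there is no real case distinction at $n=3$, only a different presentation. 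Be careful that your notation ``$D_6$'' is ambiguous; the group in question has order $12$.)

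For the upper bound the approaches diverge more substantially. The paper looks at the $n-1$ facets through $o=(1,\dots,n)$, observes that they are combinatorially isomorphic to $\Pi_k\times\Pi_{n-k}$ for $k=1,\dots,n-1$, and that the only nontrivial coincidence of combinatorial types pairs $[k]$ with $[n-k]$; this bounds the stabilizer by $2$. You instead read off the Coxeter diagram $A_{n-1}$ from the $2$-faces at a vertex (hexagons record adjacent simple transpositions, squares record commuting ones), bound the local choices by $|\Aut(A_{n-1})|=2$ for $n\geq 3$, and propagate along the $1$-skeleton. Both are valid. One suggestion on the propagation step: rather than invoking Matsumoto's theorem about braid/commutation moves (which handles reduced expressions but not arbitrary walks, so you would also need to treat the relations $s_i^2=1$ and in any case need the $2$-sphere $\partial\Pi_n$ to be simply connected, as in \Cref{prop:potential}), a cleaner local rigidity argument is available: if an automorphism $\psi$ fixes a vertex $u$ together with all edges and $2$-faces through $u$, then for each neighbor $v$ of $u$, every edge at $v$ lies in a unique $2$-face with the edge $(u,v)$, which $\psi$ fixes; hence $\psi$ fixes all edges, and therefore all $2$-faces, at $v$. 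Iterating along the connected $1$-skeleton shows $\psi=\id$, which gives the desired uniqueness of $\phi$ given its action on the star of $e$ without any appeal to topology or to the Coxeter presentation.
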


\begin{proof}
	The symmetric group $\Sym(n)$ operates on the vertices of $\Pi_n$ by multiplication on the right, and that operation is free.
        The facets of $\Pi_n$ correspond to subsets of size $k$ and whose complement has size $n-k$.
	The stabilizer of such a facet is a subgroup of $\Aut(\Pi_n)$ isomorphic to $\Sym(k)\times\Sym(n-k)$.
	It follows that the above operation of $\Sym(n)$ maps facets to facets, and thus $\Sym(n)\leq\Aut(\Pi_n)$.
	The polytope $\Pi_2$ is a segment, and $\Pi_3$ is a regular hexagon.
	For the rest of the proof we assume $n \geq 4$.
	
	We consider the linear form $\lambda(x)=x_1-x_2-x_{n-1}+x_n$ on $\RR^n$.
	This defines the reflection
	\[
	s(x) = x - \frac{x_1-x_2-x_{n-1}+x_n}{2}(1,-1,0,\dots,0,-1,1) \, .
	\]
	A direct inspection shows that $s$ permutes the vertices of $\Pi_n$, and it fixes the point $(1,2,\dots,n)$.
	It follows that the order of $\Aut(\Pi_n)$ is at least $2\cdot n!$.
	If $n\geq 4$ it can be checked that the reflection $s$ and $\Sym(n)$ commute in $\Aut(\Pi_n)$.
	For $n=3$ the linear form $\lambda$ reads $x_1-2x_2+x_3$, and the group $\langle \Sym(3), s\rangle\cong D_{12}$ is a semidirect product (but not a direct product).
	
	To finally show that $\langle \Sym(n), s\rangle=\Aut(\Pi_n)$ it suffices to show that the stabilizer of $o=(1,2,\dots,n)$ in $\Aut(\Pi_n)$ is generated by $s$.
	The vertex $o$ is incident with precisely $n-1$ facets, corresponding to the subsets $[1], [2], \dots, [n-1]$ of $[n]$, and the reflection $s$ exchanges $[k]$ with $[n-k]$.
	These two facets are isomorphic to $\Pi_k\times\Pi_{n-k}$, and they are the only ones with this property among the facets through $o$.
	The claim follows because each set of cardinality two only admits one nontrivial permutation.
	The stabilizer of $o$ and any one of its incident facets is trivial.
\end{proof}

\section{Flag matroids and subpermutahedra}

%% \begin{enumerate}
%% \item $\{2,4,13,24\}$,  8 vertices, 24 members. %$A1A1A1B2$ $[b,e,f]$ 
%% \item $\{3,4,34,124\}$,  8 vertices, 12 members. 	%$A1A1B1B2$ $[a,d,e]$
%% \item $\{3,4,23,13,34\}$, 8 vertices, 24 members.	%$A1A2$		$[a,e,f]$
%% \item $\{4,14,23,24,34,234\}$, 8 vertices, 12 members.	%$A2A2$   $\CC[a,b,d,f]/(ad-b)$
%% \item $\{3,4,34,134,234\}$, 8 vertices, 6 members.	%$B3$	$\CC[a,d,e,f]/(df-e)$
%% \item $\{4,13,234\}$, 10 vertices, 24 members. %$A1A1B1$ $\CC[a,b,e,f]/(ae+bf)$
%% \item $\{3,4,34\}$, 12 vertices, 12 members. %$A1A1B2$
%% \item $\{4,14,24,34\}$, 12 vertices, 8 members. %$A2$
%% \item $\{124,3\}$, 12 vertices, 4 members. %(opposite faces) $A1A1a$
%% \item $\{4,234\}$, 14 vertices, 12 members. %(non-opposite faces) $A1A1b$
%% \item $\{4,23\}$, 14 vertices, 24 members. %$A1B1$
%% \item $\{14,23\}$, 16 vertices, 3 members. %$B1B1$
%% \item $\{4\}$, 18 vertices, 8 members. %$A1$
%% \item $\{34\}$ 20 vertices, 6 members. %$B1$
%% \item $\emtpyset$ 24 vertices, 1 member. %$\Pi_n$
%% \end{enumerate}%Old labeling in comment

We collect various known results and combine them to yield a first few observations.

\subsection{Matroid polytopes and valuated matroids} A \emph{subpolytope} of a polytope is the convex hull of a subset of the vertices.
Each face is a subpolytope, but the converse is false.
Let $P$ be a generalized permutahedron.
A \emph{subpermutahedron} of $P$ is a subpolytope of $P$ which itself is a generalized permutahedron.
A \emph{matroid (base) polytope} $M$ of \emph{rank} $d$ is a subpermutahedron of the hypersimplex $\Delta(d,n)$. 
The bases $\cB(M) \subseteq \binom{[n]}{d}$ of a matroid consist of all subsets whose indicator vector is a vertex of $M$.
In the sequel, we will identify a matroid with its matroid base polytope. 
The \emph{uniform matroid} $U_{d,n}$, whose bases are exactly $\binom{[n]}{d}$, corresponds to $\Delta(d,n)$.

For a lattice polytope $P$, we abbreviate $P_\ZZ=P\cap\ZZ^n$.
A function $f:P_\ZZ\to\RR$ is \emph{M-convex} if the regular subdivision of the point configuration $P_\ZZ$ by the height function $f$ is permutahedral.
We use the convention that regular subdivisions are induced by lower convex hulls; see \cite{Triangulations} or \cite[\S1.2]{ETC}.
A \emph{valuated matroid} $\mu$ over a matroid $M$ is an M-convex function on $\cB(M)$. 
Equivalently, a function $\mu:\cB(M)\to \RR$ is a valuated matroid if the 3-term Pl\"ucker relations hold:
for each $S\in \binom{[n]}{d-2}$ and $i,j,k,l \notin S$, the minimum in
\begin{equation}\label{eq:3term}
  \min(\,\mu(Sij) + \mu(Skl),\, \mu(Sik) + \mu(Sjl),\, \mu(Sil) + \mu(Sjk) \,)
\end{equation}
is attained at least twice; see \cite[\S4.4]{Tropical+Book} and \cite[\S10.4]{ETC}.

We note that permutahedral subdivisions do not have to be regular, i.e., they do not have to arise from a height function. 
In particular, non-regular matroid subdivisions can be used to construct non-realizable oriented matroids~\cite{CelayaLohoYuen:2020a}. 

\subsection{Flag matroids}
Let $M$ and $N$ be matroids of ranks $d$ and $d+1$, respectively.
Then the pair $(M,N)$ forms a \emph{quotient} if the convex hull of $M\times\{1\}\cup N\times\{0\}$ is a matroid.
We denote this by $M \twoheadleftarrow N$.
Here and below we will identify matroids with their matroid base polytopes.
Two valuated matroids $\mu$ and $\nu$, with respective underlying matroids $M$ and $N$, are a \emph{(valuated matroid) quotient} if $M \twoheadleftarrow N$ and the \emph{3-term tropical incidence relations} are fulfilled; that is, for all $S\in\binom{[n]}{d-1}$ and $i,j,k \notin S$, 
\begin{equation} \label{eq:3-term-incidence} \tag{3TIR}
\min (\,\mu(Si) +\nu(Sjk),\; \mu(Sj) +\nu(Sik),\; \mu(Sk) +\nu(Sij) \,)
\end{equation}
attains the minimum at least twice.
Similarly, we denote this by $\mu \twoheadleftarrow \nu$.

We remark that (valuated) matroid quotients are often defined differently, but it is always equivalent to inclusion of (tropical) linear spaces. 
For consecutive ranks, this is equivalent to the existence of a (valuated) matroid $\xi$ over $[n+1]$ such that $\mu=\xi / (n+1)$ and $\nu = \xi \backslash (n+1)$.
Both equivalences can be found in~\cite{BrandtEurZhang:2021}. 
As the 3-term Pl\"ucker relations suffice to define valuated matroids, if its support is a matroid \cite[Corollary 5.5]{Rincon:2012}, our definition coincides with this.

A sequence of matroids $\cM = (M_{1},\ldots,M_{n})$ 
is a \emph{(full) flag matroid} if the rank of $M_i$ is $i$ and $M_{i}\twoheadleftarrow M_{j}$ for every $1\leq i<j\leq n$.
The \emph{flag matroid (base) polytope} of $\cM$ is 
\begin{equation*}
	M_1 + \dots + M_n = \conv\biggSetOf{\sum_{i=1}^{n}e_{B_i}}{(B_1,\dots,B_n) \in (\cB(M_{1}),\ldots,\cB(M_{n}))} \,.
\end{equation*}

The equivalence of (i) and (ii) in \cite[Theorem 1.11.1]{BorovikGelfandWhite:2003} characterizes base polytopes of flag matroids as those generalized permutahedra which arise as the convex hull over sums of characteristic vectors of flags of bases of a sequence of matroids.
We use a reformulation adapted from \cite[Theorem 4.1.5]{BrandtEurZhang:2021}.

\begin{theorem}[{Borovik, Gel$'$fand, and White \cite{BorovikGelfandWhite:2003}}] \label{thm:characterization-flag-base-polytope}
  A polytope is the base polytope of a full flag matroid if and only if it is a subpermutahedron of the regular permutahedron.
\end{theorem}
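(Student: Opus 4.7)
The plan is to reduce the statement to the Borovik--Gel$'$fand--White characterization of flag matroids and to \Cref{faces-permutahedron}, which identifies vertices of $\Pi_n$ with full flags in $[n]$.

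For the ``only if'' direction, I would assume $P = M_1 + \dots + M_n$ is the base polytope of a full flag matroid $\cM$. Each summand $M_i$ is a matroid polytope, hence a generalized permutahedron, and the Minkowski sum of generalized permutahedra is again a generalized permutahedron, so all edges of $P$ are parallel to $e_i - e_j$. The quotient relations $M_i \twoheadleftarrow M_{i+1}$ guarantee that for every linear functional $\omega$ on $\RR^n$ the greedy algorithm produces a single chain of bases $B_1 \subset \dots \subset B_n = [n]$ with $B_i \in \cB(M_i)$ maximizing $\omega$ on every summand simultaneously; hence every vertex of $P$ has the form $\sum_{i=1}^n e_{B_i}$ for such a chain. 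A direct calculation shows that the $j$-th coordinate of this sum equals $n - \min\{i : j \in B_i\} + 1$, which as $j$ varies over $[n]$ realizes a permutation of $(1,2,\dots,n)$. By \Cref{faces-permutahedron}(iii) this is a vertex of $\Pi_n$, so $P \subseteq \Pi_n$ and $P$ is a subpermutahedron.

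For the ``if'' direction, suppose $P$ is a subpermutahedron of $\Pi_n$ with vertex set $V \subseteq \Sym(n)$. By \Cref{faces-permutahedron}(iii) each $v \in V$ is canonically encoded by a full flag $\cF_v = (F_1^v, \dots, F_{n-1}^v)$ with $|F_k^v| = k$, and (setting $F_n^v := [n]$) we have $v = \sum_{k=1}^n e_{F_k^v}$. Define $\cB_k := \{F_k^v : v \in V\}$. The hypothesis that all edges of $P$ are parallel to some $e_i - e_j$ is exactly the symmetric exchange condition in the Borovik--Gel$'$fand--White theorem, so I would invoke \cite[Theorem~1.11.1]{BorovikGelfandWhite:2003} in its reformulation \cite[Theorem~4.1.5]{BrandtEurZhang:2021} to conclude that each $\cB_k$ is the basis set of a matroid $M_k$ and that $(M_1, \dots, M_n)$ is a flag matroid. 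Applying the ``only if'' direction to this flag matroid shows that $M_1 + \dots + M_n$ is a subpermutahedron whose vertices are exactly the sums over chains of bases, and these chains are precisely the full flags $\cF_v$ for $v \in V$; hence $P = M_1 + \dots + M_n$.

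The main obstacle I anticipate is the final equality $P = M_1 + \dots + M_n$: a priori one only knows that vertices of $P$ inject into those of the Minkowski sum, and the reverse inclusion relies on the flag matroid property ensuring that no extraneous chains of bases appear. This is exactly the substantive content imported from \cite{BorovikGelfandWhite:2003}, and verifying that the BGW hypotheses are consistent with the definition of a subpermutahedron used here (vertices in $\Sym(n)$, edges parallel to $e_i - e_j$) is the only step requiring real care.
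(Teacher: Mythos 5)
The paper does not prove this statement: it is quoted as a theorem of Borovik, Gel$'$fand, and White, and the surrounding text explicitly says it is ``a reformulation adapted from \cite[Theorem 4.1.5]{BrandtEurZhang:2021}'', with the equivalence of (i) and (ii) in \cite[Theorem 1.11.1]{BorovikGelfandWhite:2003} doing the actual work. So there is no in-paper argument to match your proof against step by step; the right comparison is whether your sketch is a faithful reduction to those references, and for the most part it is. Your ``only if'' computation that $\sum_k e_{B_k}$ has $j$-th coordinate $n - \min\{k : j \in B_k\} + 1$, hence is a permutation vector, is correct and is a nice way to make the containment $P \subseteq \Pi_n$ concrete. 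You also correctly isolate the substantive imported fact, namely that $P = M_1 + \cdots + M_n$ and not merely a proper subpolytope.

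Two places are looser than you present them. First, in the ``only if'' direction the assertion that ``the greedy algorithm produces a single chain of bases $B_1 \subset \dots \subset B_n$'' for a generic functional $\omega$ is precisely the nontrivial consequence of the quotient relations; stating it as obvious hides that this is where \cite{BorovikGelfandWhite:2003} is actually being used. Second, in the ``if'' direction the claim that the edge condition on $P$ ``is exactly the symmetric exchange condition'' for each level set $\cB_k$ is an oversimplification: edges of $P$ being in directions $e_i - e_j$ is a condition on the whole flag polytope, and extracting the basis-exchange property of each individual $\cB_k$ (and the quotient property between consecutive ones) is exactly the content of the BGW equivalence you are invoking, not something that falls out immediately. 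Since the paper itself simply cites this result, these are not gaps relative to the paper, but a self-contained proof would need to unpack both points rather than defer them.
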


Similarly, a \emph{(full) valuated flag matroid} is a sequence of valuated matroids $(\mu_{1},\ldots,\mu_{n})$ such that $\mu_{i}\twoheadleftarrow \mu_{j}$ for every $1\leq i<j\leq n$.

The \emph{Dressian} $\Dr(d,n) \subset \RR^{\binom{n}{d}}$ is the set of all valuated matroids on the uniform matroid of rank $d$ on $n$ elements.
The \emph{flag Dressian} $\FlDr(n) \subset \RR^{2^n}$ is the set of all valuated flag matroids on the flag of uniform matroids. 
Both of them carry a natural fan structure where each cone comprises all (flag) valuated matroids which induce a given subdivision.

\subsection{Bruhat (interval) polytopes}
We recall the \emph{(strong) Bruhat order} of $\Sym(n)$.
Consider the simple transpositions $\tau_1,\dots,\tau_{n-1}$, with $\tau_i = (i,i+1)$, which generate the group $\Sym(n)$.
A sequence $\tau_{i_1},\dots,\tau_{i_\ell}$ of minimal length with $\sigma:=\tau_{i_1}\cdots\tau_{i_\ell}$, is a \emph{reduced word} of $\sigma$.
Now $\sigma_1 \leq \sigma_2$ if some reduced word of $\sigma_2$ contains a subsequence (not necessarily consecutive) which is a reduced word of $\sigma_1$.
This imposes a lattice structure on $\Sym(n)$ with rank function given by the lengths of reduced words.
For $\sigma_1 \leq \sigma_2$ the convex hull of the interval $[\sigma_1,\sigma_2]$ in the Bruhat order is a \emph{Bruhat (interval) polytope}. 

\begin{figure}[th]\centering
	\begin{tikzpicture}
		\draw (0,4) -- (2,3) -- (2,1) -- (0,0) -- cycle;
		\draw [red](0,4) -- (0,0) -- (-2,1) -- (-2,3) -- cycle;
		\node at (0,4.25) {$123$};
		\node at (0,-.25) {$321$};
		\node at (-2.25,3.25) {$213$};
		\node at (2.25,3.25) {$132$};
		\node at (2.25,.75) {$231$};
		\node at (-2.25,.75) {$312$};
		
		\tikzstyle{vertex} = [fill = white, draw=black]
		\filldraw[vertex] (0,4) circle (2.5pt);
		\filldraw[vertex] (2,3) circle (2.5pt);
		\filldraw[vertex] (2,1) circle (2.5pt);
		\filldraw[vertex] (0,0) circle (2.5pt);
		\filldraw[vertex] (-2,1) circle (2.5pt);
		\filldraw[vertex] (-2,3) circle (2.5pt);
	\end{tikzpicture}
	\qquad
	\begin{tikzpicture}
		\draw (0,4) -- (2,3) -- (2,1) -- (0,0) -- (-2,1) -- (-2,3) -- cycle;
		\draw[red] (-2,1) -- (2,3);
		\draw[blue] (-2,3) -- (2,1);
		\node at (0,4.25) {e};
		\node at (0,-.25) {$\tau_{1}\tau_{2}\tau_{1}=\tau_{2}\tau_{1}\tau_{2}$};
		\node at (-2.25,3.25) {$\tau_{1}$};
		\node at (2.25,3.25) {$\tau_{2}$};
		\node at (2.25,.75) {$\tau_{1}\tau_{2}$};
		\node at (-2.25,.75) {$\tau_{2}\tau_{1}$};
		
		\tikzstyle{vertex} = [fill = white, draw=black]
		\filldraw[vertex] (0,4) circle (2.5pt);
		\filldraw[vertex] (2,3) circle (2.5pt);
		\filldraw[vertex] (2,1) circle (2.5pt);
		\filldraw[vertex] (0,0) circle (2.5pt);
		\filldraw[vertex] (-2,1) circle (2.5pt);
		\filldraw[vertex] (-2,3) circle (2.5pt);
	\end{tikzpicture}
	\caption{Left: regular hexagon $\Pi_3$ subdivided into two flag polytopes, neither of which are Bruhat polytopes.  Right: Bruhat order of $\Sym(3)$.}
	\label{fig:bruhat}
\end{figure}
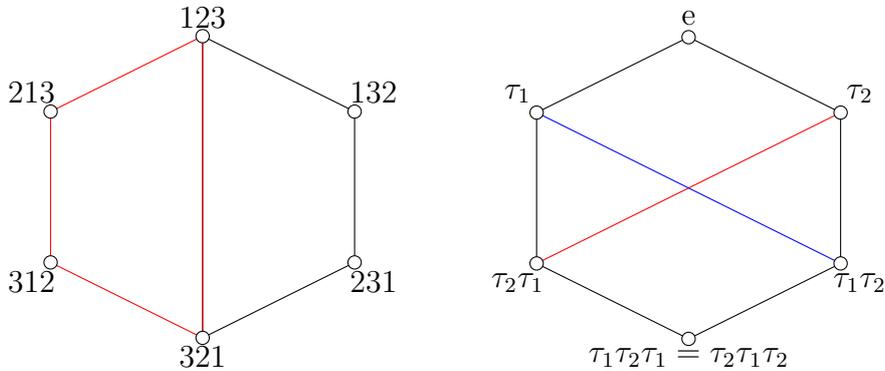

%One direction of the following proposition appears as \cite[Corollary 6.11]{KodamaWilliams:2015}, however, both directions can be deduced from \cite[Corollary 6.8 and Theorem 6.10]{KodamaWilliams:2015}.
\begin{proposition}
  \label{prop:bruhat}
  A subpolytope of a permutahedron is a Bruhat interval polytope if and only if it is the base polytope of a flag matroid which can be realized in the totally non-negative flag variety.
\end{proposition}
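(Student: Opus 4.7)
The plan is to derive both directions from results in the literature on totally non-negative flag varieties and Bruhat interval polytopes, patched together using the moment map. For any point $V_\bullet$ in the complete flag variety, its flag matroid polytope coincides with the image of the moment map on the torus orbit closure, and in particular its vertex set equals the set of $\sigma \in \Sym(n)$ for which every Plücker coordinate $\Delta_{\sigma([k])}(V_k)$ is non-zero.

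For the ``only if'' direction, given a Bruhat interval $[u,w]$ in $\Sym(n)$, I would use the fact due to Rietsch (and Marsh--Rietsch) that the open Richardson variety $\mathring{R}_{u,w}$ has a non-empty totally positive part. Any such positive point gives a totally non-negative flag $V_\bullet$ whose non-vanishing Plücker coordinates are indexed exactly by the permutations in $[u,w]$. Combined with the moment map description above and the characterization of Bruhat interval polytopes of Tsukerman--Williams, this recovers $\conv[u,w]$ as the base polytope of the flag matroid of $V_\bullet$.

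For the ``if'' direction, given a totally non-negative flag $V_\bullet$, each truncation $V_k$ lies in the totally non-negative Grassmannian, so its matroid $M_k$ is a positroid. By the theorem of Knutson--Lam--Speyer, the bases of $M_k$, viewed inside $\Sym(n)/(\Sym(k)\times \Sym(n-k))$, form a Bruhat interval $[u_k,w_k]$ in that quotient. One then needs to check that the endpoints $u_k$ and $w_k$ can be lifted compatibly with $k$ to produce full flags $u \leq w$ in $\Sym(n)$, and that a permutation $\sigma$ lies in $[u,w]$ if and only if each of its $k$-truncations lies in $[u_k,w_k]$. Granting this, Theorem \ref{thm:characterization-flag-base-polytope} identifies the Minkowski sum $M_1 + \cdots + M_n$ with $\conv[u,w]$, since both are subpermutahedra of $\Pi_n$ with the same vertex set.

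The main obstacle is the compatibility step in the ``if'' direction: turning a sequence of Bruhat intervals in the various Grassmannian quotients into a single Bruhat interval in $\Sym(n)$. This compatibility is exactly what is encoded by the quotient relations $\mu_i \twoheadleftarrow \mu_j$ in the definition of a flag matroid, together with the projection of Bruhat order from $\Sym(n)$ to its parabolic quotients. The cleanest route is to invoke existing work on flag positroids (in particular results of Bloch--Karp and Boretsky--Eur--Williams) where this statement is proved; alternatively one can argue directly from the fact that the totally non-negative flag variety stratifies into cells indexed by pairs $(u,w)$ with $u\leq w$, so that the moment image of the cell containing $V_\bullet$ is forced to be a single Bruhat interval polytope.
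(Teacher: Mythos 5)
Your proposal lands on the right circle of ideas, and the ``cleanest route'' you offer at the end is in fact the paper's proof: the paper simply cites Kodama--Williams (Proposition 6.7 and Theorem 6.10 of their paper) to say that the moment polytope of any point in a Rietsch cell $G^{>0}_{u,w}$ is $\conv[u,w]$, and then both directions follow because these cells cover the totally non-negative flag variety and every Bruhat interval indexes a non-empty cell. So the ``only if'' and ``if'' directions collapse into the single statement ``moment polytope of a cell = Bruhat interval polytope,'' with no need to pass through truncations or Richardson varieties separately.

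The longer route you actually write out for the ``if'' direction — take each truncation $V_k$, observe its matroid is a positroid, use Knutson--Lam--Speyer to get a Bruhat interval in the parabolic quotient, then try to lift and reassemble — has a real gap at exactly the point you flag. A flag whose every truncation is a positroid need not have a flag matroid polytope that is a Bruhat interval polytope (and indeed need not come from a TNN flag at all), so the family of intervals $[u_k,w_k]$ in the quotients $\Sym(n)/(\Sym(k)\times\Sym(n-k))$ need not cohere into a single interval $[u,w]$ unless you already use the full TNN hypothesis at the level of the flag, not just its constituents. Closing this via Bloch--Karp or Boretsky--Eur--Williams is legitimate but is roughly equivalent in strength to the Kodama--Williams result the paper quotes, so it does not simplify matters; the cell-decomposition argument you name last is both the correct fix and what the paper does.

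Minor point on the ``only if'' direction: your description (Rietsch/Marsh--Rietsch non-emptiness of $\mathring{R}^{>0}_{u,w}$, plus the fact that the non-vanishing Plücker coordinates of a point there are indexed by $[u,w]$, plus Tsukerman--Williams) is a correct unpacking, but it is exactly the content packaged in the Kodama--Williams theorem the paper cites, so you are re-deriving rather than shortening. Overall: correct in substance, same essential approach once you take your own advice about the cleanest route, but the written-out positroid-truncation path should be dropped.
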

\begin{proof}
  Any Bruhat interval $[\sigma_1,\sigma_2]$ defines a stratum $G^{>0}_{\sigma_1,\sigma_2}$ in the totally non-negative (TNN) flag variety and by \cite[Proposition 6.7 and Theorem 6.10]{KodamaWilliams:2015} the polytope of any flag associated to a point of $G^{>0}_{\sigma_1,\sigma_2}$ is the Bruhat polytope of $[\sigma_1,\sigma_2]$.
  Since the strata of the form $G^{>0}_{\sigma_1,\sigma_2}$ cover the TNN flag variety the converse also follows.
\end{proof}

This implies that every constituent of a Bruhat interval polytope is a positroid \cite[Corollary 6.11]{KodamaWilliams:2015}. However, the converse is not true:

\begin{example}	\label{ex:bruhat}
  Consider the quadrangle $\conv(123,213,312,321)$, which is a subpermutahedron of the hexagon $\Pi_3$; see Figure~\ref{fig:bruhat} (left).
  This is not a Bruhat interval polytope since the inclusion of the bottom element $123$ and the top element $321$ would imply that it is the entire permutahedron.
  Yet it is a flag matroid base polytope, with constituents $(\{1,3\},\{12,13,23\},\{123\})$. 
\end{example}

\subsection{Compression of valuated flag matroids}

A height function $w \colon \Sym(n) \to \RR$ is a \emph{valuated permutahedron} if each cell in the induced subdivision of $\Pi_n$ is a subpermutahedron. 
Alternatively, a height function $w \colon \Sym(n) \to \RR$ is a valuated permutahedron if and only if its piecewise-linear extension (i.e., linearly extending over each cell of the subdivision) on the lattice points $\Pi_n \cap \ZZ^{n}$ is an M-convex function.
In the latter case, it is the supremum over all M-convex functions that agree with $w$ on $\Sym(n)$.   

It turns out that these are essentially the same as full valuated flags of uniform matroids. 
Let $(\mu_1,\dots,\mu_k)$ be a sequence of valuated matroids with $\rk(\mu_1) < \dots < \rk(\mu_k)$ and underlying matroid $M_1,\dots,M_k$. 
In~\cite{FujishigeHirai:2022}, the \emph{compression} $u \colon \sum_{i=1}^{k}M_i \to\RR$ of $(\mu_1,\dots,\mu_k)$ is defined as the function
\begin{equation*}
	u(x)=\min\biggSetOf{\sum_{i\in[k]}\mu_i(Y_{i})}{x=\sum_{i\in[k]}e_{Y_{i}} \,\ Y_{i}\in \cB(M_i) \text{ for } i\in[k]} \; .
\end{equation*}

When $(\mu_1,\dots,\mu_k)$ in the former definition is a valuated flag matroid, by \cite[Theorem 4.4.2]{BrandtEurZhang:2021}, the cells in the subdivision of $\Pi_n \cap \ZZ^{n}$ induced by $u$ are themselves base polytopes of flag matroids.
Therefore, using \Cref{thm:characterization-flag-base-polytope}, the subdivision is composed of subpermutahedra of $\Pi_n$. 
On the other hand, \cite[Cor.~4.4.5]{BrandtEurZhang:2021} provides us with the converse, namely that such subdivisions arise from the compression of a valuated flag matroid. 

\begin{theorem}[{Brandt, Eur and Zhang \cite{BrandtEurZhang:2021}}]   %[{\cite[Theorem 4.4.3]{BrandtEurZhang:2021}}] 
  \label{thm:equivalenc-permutahedra-flag}
  A height function $w \colon \Sym(n) \to \RR$ is a valuated permutahedron if and only if it is the restriction $u|_{\Sym(n)}$ of the compression of a full flag of valuated uniform matroids. 
\end{theorem}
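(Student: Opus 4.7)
The plan is to obtain both directions as near-immediate consequences of the two results from \cite{BrandtEurZhang:2021} quoted in the paragraphs preceding the theorem, combined with the Borovik--Gel$'$fand--White characterization in \Cref{thm:characterization-flag-base-polytope} of subpermutahedra of $\Pi_n$ as base polytopes of full flag matroids.

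For the ``if'' direction, I would start with a full valuated flag $(\mu_1,\dots,\mu_n)$ of uniform matroids and let $u$ denote its compression. By \cite[Theorem 4.4.2]{BrandtEurZhang:2021}, the cells of the regular subdivision of $\Pi_n \cap \ZZ^n$ induced by $u$ are themselves flag matroid base polytopes, and \Cref{thm:characterization-flag-base-polytope} identifies these cells as subpermutahedra of $\Pi_n$. Since every subpermutahedron of $\Pi_n$ has all of its vertices in $\Sym(n)$, the induced subdivision of $\Pi_n$ is already determined by the values of $u$ on $\Sym(n)$. Hence $w := u|_{\Sym(n)}$ is a valuated permutahedron whose piecewise-linear extension recovers $u$ on all of $\Pi_n \cap \ZZ^n$.

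For the ``only if'' direction, I would begin with a valuated permutahedron $w$. By definition, its induced subdivision of $\Pi_n$ consists of subpermutahedral cells which, via \Cref{thm:characterization-flag-base-polytope}, are base polytopes of full flag matroids. The converse result \cite[Corollary 4.4.5]{BrandtEurZhang:2021} then produces a full valuated flag matroid whose compression $u$ realizes this subdivision; the underlying matroids must be the uniform matroids $U_{1,n},\dots,U_{n,n}$, since the Minkowski sum of their base polytopes is exactly $\Pi_n$, which is what $w$ subdivides. To finish, I would verify that $u|_{\Sym(n)} = w$, which once again uses that subpermutahedral cells contain all their vertices in $\Sym(n)$, so that the two piecewise-linear functions agree on each cell as soon as they agree on its vertices.

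The main obstacle I anticipate is making the passage between the two viewpoints careful and precise: $u$ naturally lives on the point configuration $\Pi_n \cap \ZZ^n$, whereas $w$ lives on the vertex set $\Sym(n)$ of the polytope $\Pi_n$. The key observation one has to articulate is that a permutahedral subdivision of $\Pi_n$ is uniquely determined by its restriction to $\Sym(n)$, and dually that the maximal M-convex extension of $w$ to $\Pi_n \cap \ZZ^n$ matches the compression $u$. Once this translation is in place, the theorem is essentially a restatement of the two cited results from \cite{BrandtEurZhang:2021} in the language of valuated permutahedra.
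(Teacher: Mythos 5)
Your proposal matches the paper's route exactly: both directions are deduced by citing \cite[Theorem~4.4.2]{BrandtEurZhang:2021} and \cite[Corollary~4.4.5]{BrandtEurZhang:2021}, with \Cref{thm:characterization-flag-base-polytope} used to translate between subpermutahedra and flag matroid base polytopes. The extra care you take in identifying $u|_{\Sym(n)}$ with $w$ (via the fact that permutahedral cells have all vertices in $\Sym(n)$) is a sensible elaboration of the same argument, not a different approach.
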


More generally, \Cref{thm:characterization-flag-base-polytope} indicates that a (not-necessarily regular) permutahedral subdivision of a permutahedron can be seen as a \emph{flag matroid subdivision}. 

\subsection{Two canonical permutahedral subdivisions}

Here we describe special types of permutahedral subdivisions.

We start with the canonical subdivision of a flag matroid.
Each matroid arises as a cell in its \emph{corank subdivision} of the encompassing hypersimplex \cite{Speyer:2005}, which associates the corank to each set of rank-size.  
This motivates the consideration of the height function given by the compression of the flag of corank functions of the constituents in a flag matroid.  

\begin{proposition}
\label{prop:corank}
	Every subpermutahedron of $\Pi_n$ appears as a cell of a regular permutahedral subdivision.
\end{proposition}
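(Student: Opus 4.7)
The plan is to realize $P$ as a cell in the regular permutahedral subdivision of $\Pi_{n}$ obtained by taking the compression of the valuated flag matroid assembled from the corank valuated matroids of the constituents of the flag matroid underlying $P$, exactly as foreshadowed in the paragraph preceding the statement.

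First, apply \Cref{thm:characterization-flag-base-polytope} to write $P=M_{1}+\cdots+M_{n}$ for some full flag matroid $(M_{1},\ldots,M_{n})$. For each $i$ define the corank function $c_{i}\colon\binom{[n]}{i}\to\RR$ by $c_{i}(B)=i-\rk_{M_{i}}(B)$. By Speyer~\cite{Speyer:2005}, each $c_{i}$ is a valuated matroid on $U_{i,n}$ whose induced subdivision of $\Delta(i,n)$ has $M_{i}$ as the cell on which $c_{i}$ vanishes; in particular $c_{i}\geq 0$ with equality precisely on $\cB(M_{i})$.

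The key step is to show that the tuple $(c_{1},\ldots,c_{n})$ is a valuated flag matroid, i.e.,~$c_{i}\twoheadleftarrow c_{i+1}$ for every $i$. Since the underlying uniform matroids trivially form a quotient, the content is the 3-term tropical incidence relation~\eqref{eq:3-term-incidence}. I would use the fact, recalled in the paragraph before \Cref{thm:characterization-flag-base-polytope}, that any consecutive matroid quotient $M_{i}\twoheadleftarrow M_{i+1}$ arises from a single-element extension: pick a matroid $L$ on $[n]\cup\{t\}$ of rank $i+1$ with $M_{i}=L/t$ and $M_{i+1}=L\setminus t$. Then $\rk_{M_{i}}(A)=\rk_{L}(A\cup\{t\})-1$ and $\rk_{M_{i+1}}(A)=\rk_{L}(A)$, yielding the identity
\begin{equation*}
c_{i}(S a) + c_{i+1}(S b c) \;=\; c_{L}(S a t) + c_{L}(S b c)
\end{equation*}
for every $S\in\binom{[n]}{i-1}$ and distinct $a,b,c\in[n]\setminus S$. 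Thus the three expressions entering~\eqref{eq:3-term-incidence} for $(c_{i},c_{i+1})$ at $(S;j,k,l)$ coincide termwise with the three expressions of the 3-term Pl\"ucker relation~\eqref{eq:3term} for $c_{L}$ at $S$ with the four outside elements $\{j,k,l,t\}$, which holds by Speyer's theorem applied to $L$. This verifies \eqref{eq:3-term-incidence}.

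Granted that $(c_{1},\ldots,c_{n})$ is a valuated flag matroid, \Cref{thm:equivalenc-permutahedra-flag} implies that the restriction of its compression $u$ to $\Sym(n)$ is a valuated permutahedron and hence induces a regular permutahedral subdivision of $\Pi_{n}$. To identify $P$ as a cell, note that for each $\sigma\in\Sym(n)$ the equation $\sigma=\sum_{i=1}^{n}e_{Y_{i}}$ with $|Y_{i}|=i$ has the full flag $(Y_{1}^{\sigma},\ldots,Y_{n}^{\sigma})$ associated with $\sigma$ as its unique solution, by a routine Gale--Ryser-style argument exploiting that $\sigma$ is a permutation of $\{1,\ldots,n\}$. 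Therefore $u(\sigma)=\sum_{i}c_{i}(Y_{i}^{\sigma})\geq 0$, with equality iff $Y_{i}^{\sigma}\in\cB(M_{i})$ for every $i$, iff $\sigma$ is a vertex of $P$. The lower face of the lift selected by the affine functional $0$ projects onto $\conv\{\sigma\in\Sym(n):u(\sigma)=0\}=P$, exhibiting $P$ as a cell of the subdivision. I expect the only real obstacle to be the translation in the third paragraph above --- producing the common extension $L$ and matching~\eqref{eq:3-term-incidence} against~\eqref{eq:3term} for $c_{L}$; the uniqueness of the decomposition of $\sigma$ and the identification of $P$ as the bottom cell are then bookkeeping.
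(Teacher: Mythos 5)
Your proposal is correct and follows essentially the same route as the paper: realize $P$ as a full flag matroid polytope via \Cref{thm:characterization-flag-base-polytope}, take the corank valuations, verify \eqref{eq:3-term-incidence} for each consecutive pair by passing to a single-element extension $L$ and invoking Speyer's theorem that coranks are valuated matroids, and observe that the compression vanishes exactly on the vertices of $P$, making $P$ the bottom cell. The only difference is presentational: where the paper appeals to the cited equivalence from \cite{BrandtEurZhang:2021} between \eqref{eq:3-term-incidence} and existence of a common valuated extension, you verify \eqref{eq:3-term-incidence} directly by matching the three terms termwise against the Pl\"ucker relation \eqref{eq:3term} for $c_L$ at $\{a,b,c,t\}$, which is a correct, slightly more self-contained unpacking of the same underlying idea.
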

\begin{proof}
	Given a subpermutahedron $P$, by \Cref{thm:characterization-flag-base-polytope}, it is the base polytope of a flag matroid $(M_1,\dots,M_n)$. 
	Let $\mu_i$ be the corank valuation of $M_i$, that is, $\mu_i(T) = \cork_{M_i}(T)$ for every $T \in \binom{[n]}{\rk(M_i)}$.  %$B\in \cB(M_i)$.
	We show that $(\mu_1,\dots,\mu_n)$ is indeed a valuated flag matroid.
	
	For a fixed $d \in [n-1]$, we consider $M_d$ and $M_{d+1}$.
        Since $M_d$ is a quotient of $M_{d+1}$ there exists some matroid $N$ over $[n+1]$ such that $M/(n+1)= M_d$ and $M\backslash(n+1)= M_{d+1}$. 
	Notice that the corank valuation $\nu$ of $N$ restricts and contracts to $\mu_{d+1}$ and $\mu_d$ respectively. 
	That is, $\nu(T) = \cork_M(T) = \mu_{d+1}(T)$ if $n+1\notin T$ and  $\nu(T) = \cork_M(T) = \mu_d(T-(n+1))$ for $n+1\in T$.
        Since the corank valuation is always a valuated matroid \cite[Proposition 4.5.5]{Speyer:2005}, then $\nu$ is a valuated matroid and hence $\mu_d$ and $\mu_{d+1}$ satisfy the tropical incidence relations.
	So $(\mu_1,\dots,\mu_n)$ is a valuated flag matroid. 
	
	Its compression is zero on the vertices of $P$ and is positive on the other vertices of $\Pi_n$, so it induces a regular subdivision containing $P$ as a cell. 
\end{proof}

Now, we turn to the construction of a valuated full flag matroid canonically extending a valuated matroid. 
In particular, this gives rise to an embedding of the Dressian in the flag Dressian. 

Let $\mu \colon \binom{[n]}{d} \to \RR$ be a valuated matroid of rank $d$.
For $d \geq 1$, its \emph{truncation} is the function $\mu^{(1)} \colon \binom{[n]}{d-1} \to \RR$ with $\mu^{(1)}(S) = \min_{S \subset T} \mu(T)$.
Dually for $d \leq n-1$, its \emph{elongation} is the function $\mu^{(-1)} \colon \binom{[n]}{d+1} \to \RR$ with $\mu^{(-1)}(S) = \min_{S \supset T} \mu(T)$. 
In~\cite{Murota:1997}, these operations were shown to yield valuated matroids. 

Recall from~\cite[Definition 4.2.2]{BrandtEurZhang:2021}, that $\mu^{(1)} \twoheadleftarrow \mu$ if for arbitrary $I \in \binom{[n]}{d}, J \in \binom{[n]}{d+1}$ and $i \in I \setminus J$
\[
  \mu^{(1)}(I) + \mu(J) \geq \min_{j \in J \setminus I}\mu^{(1)}(I \setminus i \cup j) + \mu(J \setminus j \cup i) \;.
\]
Let $k \in [n] \setminus I$ be an element with $\mu^{(1)}(I) = \mu(I \cup k)$.
As $\mu$ is a valuated matroid, we get
\[
  \mu(I \cup k) + \mu(J) \geq \min_{j \in J \setminus (I \cup k)}\mu(I \cup k \setminus i \cup j) + \mu(J \setminus j \cup i) \;.
\]
By definition of truncation, we get $\mu(I \cup k \setminus i \cup j) \geq \mu^{(1)}(I \setminus i \cup j)$ for each $j \in J \setminus (I \cup k)$.
Hence,
\[
  \mu^{(1)}(I) + \mu(J) \geq \min_{j \in J \setminus (I \cup k)}\mu^{(1)}(I \setminus i \cup j) + \mu(J \setminus j \cup i) \geq \min_{j \in J \setminus I}\mu^{(1)}(I \setminus i \cup j) + \mu(J \setminus j \cup i) 
\]
which proves the claim. 
Dually, it follows that $\mu \twoheadleftarrow \mu^{(-1)}$.
We summarize this in the following statement. 

\begin{lemma} \label{lem:truncation+elongation+quotient}
  For a valuated matroid $\mu \colon \binom{[n]}{d} \to \RR$, its truncation and its elongation form a quotient with $\mu$, that is, we have $\mu^{(1)} \twoheadleftarrow \mu$ and $\mu \twoheadleftarrow \mu^{(-1)}$.
\end{lemma}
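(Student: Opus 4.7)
The plan is to invoke the local characterization of valuated matroid quotients from~\cite[Definition 4.2.2]{BrandtEurZhang:2021} recalled just before the statement, which reduces $\mu^{(1)} \twoheadleftarrow \mu$ (and dually $\mu \twoheadleftarrow \mu^{(-1)}$) to verifying a single exchange-type inequality parametrised by sets $I$, $J$ and a transfer element $i \in I \setminus J$. Granting this reduction, the task becomes to derive those inequalities directly from the ordinary symmetric exchange axiom for $\mu$ itself, exploiting the fact that truncation and elongation are both defined as pointwise minima over single-element extensions of $\mu$.

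For the truncation case, I would fix $I$, $J$, $i$ as in the local criterion and pick a witness $k \in [n] \setminus I$ realising $\mu^{(1)}(I) = \mu(I \cup k)$. Applying the symmetric exchange axiom for $\mu$ to the pair $(I \cup k, J)$ with exchange element $i$ yields a lower bound for $\mu^{(1)}(I) + \mu(J) = \mu(I \cup k) + \mu(J)$ in the form of a minimum over $j \in J \setminus (I \cup k)$ of expressions $\mu(I \cup k \setminus i \cup j) + \mu(J \setminus j \cup i)$. To close the argument, I would replace each $\mu(I \cup k \setminus i \cup j)$ by the smaller value $\mu^{(1)}(I \setminus i \cup j)$ (legitimate because $I \cup k \setminus i \cup j$ is a one-element extension of $I \setminus i \cup j$, and the truncation takes the minimum over such extensions), and then enlarge the index set from $J \setminus (I \cup k)$ to $J \setminus I$. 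Both moves weaken the bound, so the required quotient inequality is obtained.

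For the elongation I would run the dual argument. This time pick $k \in J$ with $\mu^{(-1)}(J) = \mu(J \setminus k)$, apply the symmetric exchange axiom for $\mu$ to the pair $(I, J \setminus k)$ with the same transfer element $i$ (admissible because $i \notin J$ implies $i \notin J \setminus k$), and then replace $\mu(J \setminus k \setminus j \cup i)$ by the smaller value $\mu^{(-1)}(J \setminus j \cup i)$ before enlarging the index set from $(J \setminus k) \setminus I$ to $J \setminus I$.

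I do not foresee a substantive obstacle. The only point requiring care is the index bookkeeping: one must verify that each replacement of $\mu$ by $\mu^{(\pm 1)}$ and each enlargement of the index set goes in the direction that weakens, rather than strengthens, the inequality, and that the witnesses $k$ are genuinely available (guaranteed by the rank hypotheses $d \geq 1$ for truncation and $d \leq n-1$ for elongation that are implicit in the definitions of $\mu^{(1)}$ and $\mu^{(-1)}$).
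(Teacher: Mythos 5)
Your proof matches the paper's argument step for step: the paper likewise invokes the local quotient criterion from Brandt--Eur--Zhang, picks a witness $k$ realising $\mu^{(1)}(I)=\mu(I\cup k)$, applies the exchange axiom of $\mu$ to $(I\cup k,\,J)$ with transfer element $i$, replaces $\mu(I\cup k\setminus i\cup j)$ by the smaller $\mu^{(1)}(I\setminus i\cup j)$, and then enlarges the index set from $J\setminus(I\cup k)$ to $J\setminus I$. The only difference is that the paper dispatches the elongation case with the single word ``Dually,'' whereas you spell out the dual bookkeeping explicitly --- a harmless and correct elaboration of the same argument.
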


We define higher truncations and elongations by setting $\mu^{(k+1)} := \left(\mu^{(k)}\right)^{(1)}$ and $\mu^{(-k-1)} := \left(\mu^{(-k)}\right)^{(-1)}$ for $k \geq 0$ with $\mu^{(0)} := \mu$. 
Iterating this construction, Lemma~\ref{lem:truncation+elongation+quotient} shows that $(\mu^{(d-1)},\dots,\mu^{(1)},\mu,\mu^{(-1)},\dots,\mu^{(d-n)})$ is a flag matroid.
This gives rise to an embedding of the Dressian into the flag Dressian.

\begin{proposition}
  The map $\iota \colon \Dr \to \FlDr$ defined by
  \[
  \mu \mapsto \left(\mu^{(d-1)},\dots,\mu^{(1)},\mu,\mu^{(-1)},\dots,\mu^{(d-n)}\right) 
  \]
  is a piecewise-linear embedding of the Dressian in the flag Dressian. 
\end{proposition}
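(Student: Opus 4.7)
The statement requires checking two things: that $\iota(\mu)$ indeed lies in the flag Dressian $\FlDr(n)$, and that $\iota$ is a piecewise-linear map which is a homeomorphism onto its image. I would treat these in turn, the second being considerably softer than the first.

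\textbf{Well-definedness.} Every iterated truncation $\mu^{(k)}$ (for $1 \leq k \leq d-1$) and every iterated elongation $\mu^{(-k)}$ (for $1 \leq k \leq n-d$) is again a valuated matroid by the result of Murota cited at the start of the subsection. The ranks in the sequence $\iota(\mu)$ therefore run through $1, 2, \ldots, n$ exactly once. Applying Lemma~\ref{lem:truncation+elongation+quotient} to the pair $\bigl(\mu^{(k)}\bigr)^{(1)} = \mu^{(k+1)}$ (and dually for the elongation side) shows that consecutive entries of $\iota(\mu)$ form quotients. To upgrade this to the full flag condition $\mu_i \twoheadleftarrow \mu_j$ for \emph{all} pairs $i<j$ demanded by the definition of a valuated flag matroid, I would invoke transitivity of valuated matroid quotients, a standard fact implicit in~\cite{BrandtEurZhang:2021} (and tracing back on the combinatorial side to \cite{BorovikGelfandWhite:2003}).

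\textbf{Injectivity and piecewise-linearity.} Since $\mu = \mu^{(0)}$ occurs as the rank-$d$ coordinate of $\iota(\mu)$, the coordinate projection $\pi\colon \RR^{2^n}\to \RR^{\binom{n}{d}}$ onto that block is a linear map with $\pi\circ\iota = \id_{\Dr(d,n)}$. This immediately gives injectivity, and shows that the inverse of $\iota$ on its image is the restriction of a linear map, hence continuous. For piecewise-linearity, unfolding the recursive definition yields
\[
\mu^{(k)}(S) \;=\; \min\SetOf{\mu(T)}{T\supset S,\ |T|=d}
\]
for $k\geq 0$ with $|S| = d-k$, and dually
\[
\mu^{(-k)}(S) \;=\; \min\SetOf{\mu(T)}{T\subset S,\ |T|=d}
\]
for $k\geq 0$ with $|S|=d+k$. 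Each coordinate of $\iota(\mu)$ is thus a minimum of finitely many linear functionals in $\mu$, which is exactly the definition of a piecewise-linear function. Combined with injectivity and a linear left inverse, this makes $\iota$ a piecewise-linear embedding.

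\textbf{Expected obstacle.} The only genuinely non-formal step is the promotion of consecutive quotients to quotients between all pairs. If one is willing to cite transitivity of the valuated quotient relation, the proof is essentially a bookkeeping exercise. Otherwise, one would have to verify the 3-term tropical incidence relations~\eqref{eq:3-term-incidence} directly between $\mu^{(i)}$ and $\mu^{(j)}$ for non-adjacent $i,j$; this is possible by iterating the witness-$k$ argument carried out in the paragraph just before Lemma~\ref{lem:truncation+elongation+quotient}, but is the most technical point in the argument.
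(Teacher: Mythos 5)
Your proof is correct and fills in details that the paper leaves to the reader: the proposition is stated without a separate proof, relying on the preceding paragraph (which establishes well-definedness via iterating Lemma~\ref{lem:truncation+elongation+quotient}) and the phrase ``this gives rise to an embedding.'' Your injectivity argument via the linear coordinate projection onto the rank-$d$ block, and the piecewise-linearity from the closed-form expressions $\mu^{(k)}(S)=\min\smallSetOf{\mu(T)}{S\subset T,\,|T|=d}$ (and the dual), are exactly the right observations and are what the word ``embedding'' amounts to. On the one potential gap you flag, the transitivity of the valuated quotient relation: the paper's remark that $\twoheadleftarrow$ is always equivalent to inclusion of tropical linear spaces resolves it immediately, since set containment is transitive; alternatively, \cite{BrandtEurZhang:2021} shows that consecutive-rank quotients already suffice to define a full valuated flag matroid, which is implicitly what the paper invokes when it says ``iterating this construction.'' Either route closes the gap, so your proof is complete.
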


\subsection{Incidence relations imply Pl\"ucker relations}

Now, we deal with the interplay of the valuated matroids in a flag.
For (non-tropical) Pl\"ucker vectors, the implication of the Pl\"ucker relation from the incidence relation occurred in~\cite{JellMarkwigRinconSchroter:2003.02660}.
On the combinatorial level, this was studied in the context of \enquote{M$^{\natural}$-convex set functions}; cf.\ \cite{MurotaShioura:2018}.
Indeed, imposing supermodularity among the constituents of a valuated flag matroid gives rise to an M$^{\natural}$-convex set function. 

\begin{theorem} 
  \label{thm:1step}
  Let $\mu \colon \binom{[n]}{d} \to \RR$ and $\nu \colon \binom{[n]}{d+1} \to \RR$ be any two functions satisfying the tropical incidence relations \eqref{eq:3-term-incidence}. 
  Then $\mu$ and $\nu$ are valuated matroids.
\end{theorem}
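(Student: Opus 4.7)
The plan is to prove the 3-term Pl\"ucker relation for $\mu$; since both $\mu$ and $\nu$ are real-valued their supports are the full uniform matroids, and by Rinc\'on's characterisation \cite[Corollary 5.5]{Rincon:2012} this suffices. The argument for $\nu$ is entirely parallel; alternatively, one checks that the pair of matroid-duals $(\nu^{*},\mu^{*})$ again satisfies \eqref{eq:3-term-incidence}, and applies the statement for $\mu$ to this dual pair.

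Fix $T\in\binom{[n]}{d-2}$ and four distinct $i,j,k,l\notin T$. Abbreviate the six relevant $\mu$-values by $A=\mu(Tij)$, $B=\mu(Tik)$, $C=\mu(Til)$, $D=\mu(Tjk)$, $E=\mu(Tjl)$, $F=\mu(Tkl)$, and the four $\nu$-values by $p=\nu(Tijk)$, $q=\nu(Tijl)$, $r=\nu(Tikl)$, $s=\nu(Tjkl)$. Applying \eqref{eq:3-term-incidence} with $S=T\cup\{x\}$ and triple $\{i,j,k,l\}\setminus\{x\}$ produces, for $x\in\{i,j,k,l\}$, four ``min-attained-twice'' relations $I_i,I_j,I_k,I_l$ on three tropical monomials each.

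Suppose, for contradiction, that the 3-term Pl\"ucker relation for $\mu$ fails at this choice of $T$ and $\{i,j,k,l\}$. After relabelling $i,j,k,l$ we may assume $A+F<B+E$ and $A+F<C+D$. The key step is to tropically shift $I_i,I_k,I_l$ by the complementary-pair values $F,C,B$ respectively; this yields three min-twice relations whose three ``targets'' $A+F+r$, $C+D+r$, $B+E+r$ each appear in exactly one of them, while the three ``helpers'' $B+F+q$, $C+F+p$, $B+C+s$ each appear in exactly two. A finite case analysis on these three relations, driven by the two strict inequalities above, forces the three helpers to coincide with a common value $h\le A+F+r$.

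Solving the helper equalities for $p,q,s$ in terms of $h$ yields $p=h-C-F$, $q=h-B-F$, $s=h-B-C$. Substituting into the \emph{fourth} incidence relation $I_j:\min(A+s,\,D+q,\,E+p)$, the pairwise differences compute to $(D+q)-(A+s)=(C+D)-(A+F)>0$ and $(E+p)-(A+s)=(B+E)-(A+F)>0$. Hence $A+s$ is the strict minimum of $I_j$, contradicting the min-twice property and completing the proof for $\mu$. The main obstacle will be the case analysis that collapses every consistent branch of the three shifted relations to the ``common helper'' configuration; once this reduction is in hand, the fourth relation $I_j$ delivers the contradiction with almost no further work.
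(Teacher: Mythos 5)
Your proof is correct, and it takes a route that is genuinely different from (though dual to) the paper's.  The paper argues on the other side: it supposes a $3$-term Pl\"ucker violation for $\nu$ at a set $S\in\binom{[n]}{d-1}$ and indices $i,j,k,l$, translates by $\xi=\sum_a \mu(Sa)\,e_a$ so that $\mu(Si)=\mu(Sj)=\mu(Sk)=\mu(Sl)=0$, which collapses the three incidence relations at $S$ with triples $ijk,\,ikl,\,ijl$ into pure ``min attained twice'' conditions on $\nu$ alone, and then chains a short sequence of equalities/inequalities to reach $\nu(Sij)<\nu(Sik)$ and $\nu(Sik)<\nu(Sij)$ simultaneously.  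It then obtains $\mu$ by matroid duality.  You instead suppose a Pl\"ucker violation for $\mu$ at $T\in\binom{[n]}{d-2}$, use the four incidence relations at $T\cup\{x\}$ for $x\in\{i,j,k,l\}$, and replace the paper's normalization by a tropical shift of $I_i,I_k,I_l$; your structured case split correctly forces $B+F+q=C+F+p=B+C+s=h\le A+F+r$ (I checked the four sub-cases of $I_i$), after which the fourth relation $I_j$ has $A+s$ as a strict minimum, a contradiction.  Finally you recover $\nu$ by duality, exactly mirroring the paper's recovery of $\mu$.  In effect your proof is the complementation-dual of the paper's; what it buys is a cleaner ``one common slack value'' structure in place of the paper's linear chain of deductions, at the cost of invoking one extra incidence relation.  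Two small points worth making explicit when you write this up: justify the WLOG relabelling that puts the unique strict minimum at $A+F$ (the symmetric group on $\{i,j,k,l\}$ acts transitively on the three complementary pairings), and spell out the case split on $\min(\alpha,u,v)$ attained twice with $\alpha<\beta,\,\alpha<\gamma$, since this is where the argument actually closes.
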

\begin{proof}
	We show that $\nu$ satisfies the 3-term Pl\"ucker relations \eqref{eq:3term}.
	For a contradiction, suppose there is a set $S$ and $i,j,k,l \in [n]\backslash S$ with
	\begin{equation} \label{eq:violated-twice-min}
		\nu(Sij) + \nu(Skl) < \nu(Sik) + \nu(Sjl) \quad \text{and} \quad \nu(Sij) + \nu(Skl) < \nu(Sil) + \nu(Sjk) \, .
	\end{equation}
	
	Let $\xi = \mu(Si) e_i + \mu(Sj) e_j + \mu(Sk) e_k + \mu(Sl) e_l$.
	Defining $\nu'$ with $\nu'(T) = \nu(T) - \langle\xi,e_T\rangle$ for all $T \in \binom{[n]}{d+1}$ and defining $\mu'$ by the same translation from $\mu$, the pair $(\mu,\nu)$ is a valuated matroid quotient if and only if $(\mu',\nu')$ is a quotient.
	Hence, we can assume that $\mu(Si) = \mu(Sj) =\mu(Sk) = \mu(Sl) = 0$. 
	With this, applying \eqref{eq:3-term-incidence} to the index triplet $ijk$ yields that the minimum in $\min(\nu(Sij),\nu(Sik),\nu(Sjk))$ is attained twice.
        Similarly, the two minima $\min(\nu(Sik),\nu(Sil),\nu(Skl))$, from $ikl$, and $\min(\nu(Sij),\nu(Sil),\nu(Sjl))$, from $ijl$, are attained twice.
        
        By \eqref{eq:violated-twice-min}, $\min(\nu(Sij),\nu(Skl)) \leq \max(\nu(Sik),\nu(Sjl),\nu(Sil),\nu(Sjk))$, so we can assume that $\nu(Sij) < \nu(Sik)$.
	Combining this with the minimum condition for $ijk$ yields $\nu(Sij) = \nu(Sjk)$.
	From the second inequality in~\eqref{eq:violated-twice-min}, we get $\nu(Skl) < \nu(Sil)$.
	By the minimum condition, we have $\nu(Skl) = \nu(Sik)$. 
	From the first inequality in~\eqref{eq:violated-twice-min}, we get $\nu(Sij) < \nu(Sjl)$.
	Again by the minimum condition, $\nu(Sij) = \nu(Sil)$. 
	With the above, this yields $\nu(Sij) = \nu(Sil) > \nu(Skl) = \nu(Sik)$, which contradicts the original assumption. 
	Hence $\nu$ satisfies all 3-term Pl\"ucker relations and so it is a valuated matroid.

	By duality, $\mu$ must also be a valuated matroid.
\end{proof}

We will use the following example as a coherent illustration of the main theorems.
\begin{example}\label{ex.1} 
Let $\puiseuxseries{\RR}{t}$ denote the field of Puiseux series with the valuation $\val \colon \puiseuxseries{\RR}{t}\to\QQ$ such that $\val(q)=k$, where $k$ is the lowest exponent that appears in $q$. 
We begin with the $3{\times}3$ matrix with entries from $\puiseuxseries{\RR}{t}$ below.
\begin{center}
	$A=\begin{bmatrix}
		t & 1 &t^{2}\\
		t^{4} & 1+t^{2} & t\\
		1&1&1
	\end{bmatrix}$
\end{center}
Let $L_{i}$ be the span of the first $i$ rows of $A$. 
Below we have the Pl\"ucker coordinates of $L_{1},L_{2}$, and $L_{3}$. 
Note that the entries of $p_{L_{i}}$ are indexed by the elements of ${[3]\choose i}$, ordered lexicographically.
We have
\iffalse
\[
  \begin{aligned}
    p_{L_{1}}&=(t,1,t^{2})\\
    p_{L_{2}}&=(t+t^{3}-t^{4},t^{2}-t^{6},t-t^{2}-t^{4})\\
    p_{L_{3}}&=(2t-t^2-2t^{4}+t^{6})
  \end{aligned}
\]
\fi
\[
    p_{L_{1}}=(t,1,t^{2}),\; p_{L_{2}}=(t+t^{3}-t^{4},t^{2}-t^{6},t-t^{2}-t^{4}),\; p_{L_{3}}=2t-2t^2+t^{3}-2t^{4}+t^{6}
\]

\begin{figure}[th]\centering
  \begin{tikzpicture}
    \draw (0,4) -- (2,3) -- (2,1) -- (0,0) -- (-2,1) -- (-2,3) -- cycle;
    % \draw (-2,1) -- (2,3);
    \draw (-2,1) -- (2,3);
    \node at (.25,4.25) {$w((1,2,3))=4$};
    \node at (.25,-.30) {$w((3,2,1))=3$};
    \node at (-3.30,3.30) {$5=w((2,1,3))$};
    \node at (3.25,3.25) {$w((1,3,2))=2$};$
    \node at (3.25,.7) {$w((2,3,1))=2$};
    \node at (-3.25,.65) {$4=w((3,1,2))$};$
    
    \tikzstyle{vertex} = [fill = white, draw=black]
    \filldraw[vertex] (0,4) circle (2.5pt);
    \filldraw[vertex] (2,3) circle (2.5pt);
    \filldraw[vertex] (2,1) circle (2.5pt);
    \filldraw[vertex] (0,0) circle (2.5pt);
    \filldraw[vertex] (-2,1) circle (2.5pt);
    \filldraw[vertex] (-2,3) circle (2.5pt);
  \end{tikzpicture}
  \caption{Subdivision of $\Pi_3$ into Bruhat interval polytopes.}
  \label{fig:encompassing}
\end{figure}
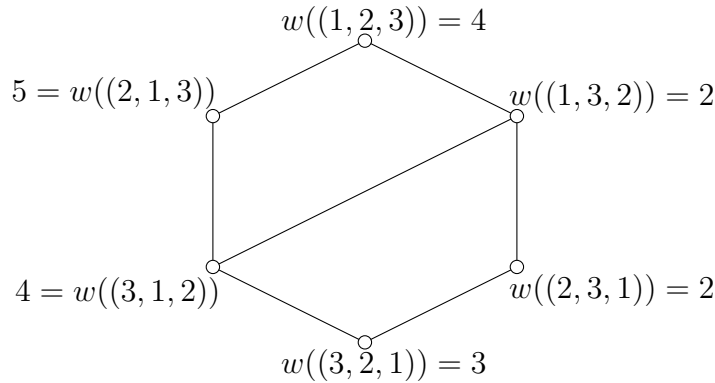

Tropicalizing the Pl\"ucker coordinates of $L_{1}$ and $L_{2}$ and $L_{3}$ yields functions $\mu_{i}:{[3]\choose i}\to\RR$, $i\in[3]$, such that
\iffalse
\[
  \begin{aligned}
    \mu_{1} &= (\mu_{1}(1),\mu_{1}(2),\mu_{1}(3)) = (1,0,2)\\
    \mu_{2} &= (\mu_{2}(12),\mu_{2}(13),\mu_{2}(23)) = (1,2,1)\\
    \mu_{3}&=1
  \end{aligned}
\]
\fi
\[
    \mu_{1} = (1,0,2),\;
    \mu_{2} = (1,2,1),\;
    \mu_{3}=1 \enspace .
\]

The Dressian $\Dr(d,n)$ is known to contain the tropicalized Grassmannian $\TGr(d,n)$ as a set.
Therefore we know $\mu_1,\mu_2,\mu_3$ to be valuated matroids. 
As we have $L_1\subset L_2\subset L_3$, the sequence $(\mu_1,\mu_2,\mu_3)$ is a valuated full uniform flag matroid on $[3]$.
The restriction of the compression to the vertices yields a permutahedral subdivision of $\Pi_3$; see \Cref{fig:encompassing}.
%Recall that we can identify the vertices of $\Pi_{3}$ with flags of rank $(1,2,3)$.
\iffalse
Define $w:\Sym(3)\to \RR$ to be the compression of $(\mu_{1},\mu_{2},\mu_{3})$ restricted to the vertices of $\Pi_{3}$.
We then have
\[
  \begin{matrix}
    w((1,2,3))=4&
    w((2,1,3))=5&
    w((1,3,2))=2&\\
    w((3,1,2))=4&
    w((2,3,1))=2&
    w((3,2,1))=3&\\
  \end{matrix}
\]
Observe that
\[ w((1,2,3))+w((3,1,2))+w((2,3,1))=w((2,1,3))+w((1,3,2))+ w((3,2,1))=10 \]
and
\[ \max\left\{w((1,2,3))+w((3,2,1)),w((2,1,3))+w((2,3,1)),w((1,3,2))+w((3,1,2))\right\}=7 \]
is attained twice.
That is, $w$ meets the criteria described in \Cref{thm:regular-2-skeleton} to induce permutahedral subdivisions on hexagons.

Moreover,
\[ \max\left\{w((1,2,3))+w((3,2,1)),w((2,1,3))+w((2,3,1)),w((1,3,2))+w((3,1,2))\right\} \]
attains its maximum at the $w((1,2,3))+w((3,2,1))$ entry, corresponding to the maximal and minimal elements of $\Sym(3)$ in the Bruhat order.
Further, by \Cref{prop:bruhat} and \Cref{thm:positivity}, $w$ subdivides $\Pi_{3}$ into Bruhat interval polytopes (see \Cref{ex:bruhat}).
\fi
\end{example}

\section{Regular permutahedral subdivisions}

Based on the structure of the permutahedron, we derive conditions for a regular subdivision to be permutahedral: 
a height function induces a permutahedral subdivision of $\Pi_n$ if and only if it does so in the 2-skeleton of $\Pi_n$.
% Explicitly, the 2-dimensional faces of $\Pi_n$ are hexagons and squares, see \Cref{sec:geometry-regular-permutahedron}.
% A permutahedral subdivision cannot introduce diagonals in the squares and it can only have long diagonals in hexagons.

To this end consider an arbitrary polytope $P$ with vertex-edge graph $\Gamma=(V,E)$.
We duplicate each edge, equipped with two opposite orientations; this turns $\Gamma$ into a directed graph, which we denote $\Gamma^\pm=(V,E^\pm)$.
Now any function $f:V\to\RR$ defined on the vertices yields a function $g:E^\pm\to\RR$ on the directed edges by letting
\begin{equation}\label{eq:potential}
  g(v,w) = f(v)-f(w) \qquad \text{for distinct } v,w\in V \; ,
\end{equation}
where $g(w,v)=-g(v,w)$.
We assume $n:=\dim P\geq 2$, whence $\partial P\approx\Sph^{n-1}$ is simply connected.
The function $f$ can be recovered from $g$ under the following conditions:
\begin{proposition}\label{prop:potential}
  Let $g:E^\pm\to\RR$ be a function on the directed edges of $P$ which satisfies $\sum_{i=1}^{k} g(v_i,v_{i+1})=0$ for every $2$-face of $P$ with vertices $v_1,v_2,\dots,v_{k+1}=v_1$ (labeled cyclically).
  Then there is a unique function $f:V\to\RR$ on the vertices with \eqref{eq:potential} and $f(s)=f_0$ for any fixed $s\in V$ and $f_0\in\RR$.
\end{proposition}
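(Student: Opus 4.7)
The plan is to reconstruct $f$ from $g$ by summing $g$ along paths in the vertex-edge graph $\Gamma$, and to use the 2-face cycle condition (together with simple connectivity of $\partial P$) to show that the result is independent of the path.

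Uniqueness is immediate: if $f_1, f_2 \colon V \to \RR$ both satisfy \eqref{eq:potential} and agree at $s$, then $f_1 - f_2$ is constant along every edge, hence constant on $\Gamma$ (which is connected, being the 1-skeleton of a polytope of positive dimension). Combined with $f_1(s) = f_2(s) = f_0$, this forces $f_1 = f_2$.

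For existence, I fix a vertex $v$, pick a walk $s = v_0, v_1, \dots, v_k = v$ in $\Gamma$, and set
\[
  f(v) \;:=\; f_0 - \sum_{i=0}^{k-1} g(v_i, v_{i+1}) \, .
\]
If this value is independent of the chosen walk, then extending a walk to $v$ by one edge to a neighbor $w$ produces $g(v,w) = f(v) - f(w)$, so \eqref{eq:potential} holds. Using the antisymmetry of $g$, path-independence reduces to the claim that the $g$-sum along every closed walk in $\Gamma$ vanishes.

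The main step is to reduce an arbitrary closed walk to boundaries of 2-faces, on which the sum is zero by hypothesis. Viewing $\Gamma$ together with the 2-faces of $P$ as the 2-skeleton of the sphere $\partial P \approx \Sph^{n-1}$, simple connectivity lets me deform any closed walk to the trivial one by a finite sequence of elementary moves, each replacing a portion of the walk tracing one arc of some 2-face boundary $\partial F$ by the complementary arc. Every such move leaves the $g$-sum unchanged by the hypothesis on $F$, and the trivial walk has $g$-sum zero. The main obstacle is this homotopy-theoretic step; elementarily, one proves it by induction on the area of a combinatorial disk in $\partial P$ bounded by the walk, peeling off a 2-face sharing an edge with the current boundary at each step.
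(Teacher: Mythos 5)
Your proof is correct and follows essentially the same strategy as the paper: define $f$ by accumulating $g$ along paths from the base vertex (the paper fixes a spanning tree where you use arbitrary walks, but these amount to the same thing once path-independence is in hand), reduce everything to the vanishing of the $g$-sum on closed walks, and establish that vanishing via the 2-face hypothesis together with simple connectivity of $\partial P$ for $n\geq 2$. At the step you flag as the main obstacle -- contracting a closed walk by a finite sequence of elementary moves across 2-faces -- the paper invokes the combinatorial contraction procedure from Seifert--Threlfall rather than the induction on disk area you sketch, but this is the same idea.
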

\begin{proof}
  Pick a directed spanning tree $T$ of $\Gamma^\pm$ rooted at $s$, and define a function $f:V\to\RR$ by inductively setting $f(s)=f_0$ and $f(v) = g(v,w) + f(w)$ along the directed edges of $T$.
  We need to show that $f$ satisfies $f(v)-f(w)=g(v,w)$ for distinct vertices $v,w$.
  To this end it suffices to prove that $h(c):=\sum_{i=1}^{\ell} g(w_i,w_{i+1})=0$ holds for any directed cycle $c=(w_1,w_2,\dots,w_\ell,w_{\ell+1}=w_1)$ in $\Gamma$.
  The boundary complex $\partial P$ is a polytopal complex homeomorphic to $\Sph^{n-1}$.
  A combinatorial procedure for computing the fundamental group of a polytopal complex is given in \cite[\S44]{SeifertThrelfall} (where this is proved for simplicial complexes).
  This has the following consequences.
  First, any path in $\partial P$ is homotopic to a path in $\Gamma$.
  Second, due to $n\geq 2$, the boundary $\partial P \approx \Sph^{n-1}$ is simply connected, and thus every closed path in $\Gamma$ can be contracted to a constant path within the $2$-skeleton of $P$.
  In this way, up to homotopy, the cycle $c$ can be contracted combinatorially in the following sense: there is a sequence of cycles $c_1, c_2, \dots, c_m$ in $\Gamma$ such that $c_1=c$, the cycle  $c_m$ is trivial (without any edges), and the symmetric difference between the edges in $c_i$ and $c_{i+1}$ forms a $2$-face.
  Now the assumption on $g$ gives $0=h(c_m)=h(c_{m-1})=\dots=h(c_0)=h(c)$, as we wanted to show.
  A similar argument yields the uniqueness of $f$.
\end{proof}
% Keep this comment, even if it is not strictly necessary (which is why we delete it from the text).
% The above argument is similar to the proof of \cite[Theorem~8]{Joswig:2002}; see also \cite[\S8.2]{Schrijver03:CO_A}.
Before we will apply this statement to the hypersimplex, we need to relate functions on the permutahedron and on the hypersimplex. 
To this end let $G^\pm_{\Pi}(n)$ be the directed vertex-edge graph of $\Pi_n$ as above.
Similarly, let $G^\pm_\Delta(d,n)$ be a directed version of the vertex-edge graph of the hypersimplex $\Delta(d,n)$.
Recall that each edge of $\Pi_n$ gives rise to a pair $(A,B)$ of equicardinal subsets of $[n]$ with $|A \triangle B| = 2$, and that each edge of $\Delta(d,n)$ corresponds to a pair $(A,B)$ of d-subsets of $[n]$ with $|A \triangle B| = 2$. 

\begin{lemma} \label{lem:squares-function}
  Let $g$ be a function on the directed edges of $G^\pm_{\Pi}(n)$ such that, on each $2$-face of $\Pi_n$ which is a square, parallel directed edges attain the same $g$-value.
  Then this yields a function $g'$ on the directed edges of $\bigcup_{d=0}^{n} G^\pm_{\Delta}(d,n)$ such that $g'(A,B) = g(e)$ for all equicardinal subsets of $[n]$ with $|A \triangle B| = 2$ and all edges $e$ of $\Pi_n$ corresponding to $(A,B)$. 
\end{lemma}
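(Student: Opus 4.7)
The goal is to verify that $g'$ is well-defined: namely, that for any ordered pair $(A,B)$ of equicardinal subsets of $[n]$ with $|A\triangle B|=2$, all edges of $\Pi_n$ corresponding to $(A,B)$ carry the same $g$-value when oriented consistently. Once this is established, setting $g'(A,B)$ to this common value produces the required function, since the edges of the hypersimplex $\Delta(d,n)$ are indexed precisely by such ordered pairs with $|A|=|B|=d$, and $g'$ is clearly antisymmetric under swapping $A$ and $B$.

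By \Cref{faces-permutahedron}(vi), the edges of $\Pi_n$ corresponding to $\{A,B\}$ all lie in a single face $F$ of $\Pi_n$ affinely equivalent to $\Pi_{k}\times\Pi_2\times\Pi_{n-k-2}$, where $k:=|A\cap B|$ (so that $|A\cup B|=k+2$). Under this product decomposition, the edges in question are exactly those of the form $\{v\}\times\epsilon\times\{w\}$, where $\epsilon$ denotes the unique edge of the middle factor $\Pi_2$ and $(v,w)$ ranges over the vertices of $\Pi_k\times\Pi_{n-k-2}$. The orientation specified by $(A,B)$ distinguishes the two directed versions of $\epsilon$, and this choice induces a consistent orientation on every such edge across all $(v,w)$.

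Given two such edges $e_1=\{v_1\}\times\epsilon\times\{w_1\}$ and $e_2=\{v_2\}\times\epsilon\times\{w_2\}$, I plan to connect $(v_1,w_1)$ to $(v_2,w_2)$ via a path in the edge graph of $\Pi_k\times\Pi_{n-k-2}$, changing a single coordinate per step. For each step — say, replacing $v_1$ by an adjacent vertex $v_1'$ of $\Pi_k$ while keeping $w_1$ fixed — the product $[v_1,v_1']\times\epsilon\times\{w_1\}$ of two edges and a point is a $2$-face of $F$, and hence of $\Pi_n$, combinatorially equivalent to a square. Its two sides parallel to $\epsilon$ are precisely $\{v_1\}\times\epsilon\times\{w_1\}$ and $\{v_1'\}\times\epsilon\times\{w_1\}$, oriented in parallel. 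The hypothesis then forces them to have the same $g$-value, and iterating along the path yields $g(e_1)=g(e_2)$. The only real obstacle is confirming that the $2$-faces arising from single-coordinate moves are genuine squares of $\Pi_n$, which is immediate from the product structure in \Cref{faces-permutahedron}(vi), and that the orientations induced by $(A,B)$ carry across the product decomposition.
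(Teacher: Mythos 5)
Your proof is correct and follows essentially the same approach as the paper: both invoke Proposition~\ref{faces-permutahedron}(vi) to locate all edges for a given pair $(A,B)$ inside the face isomorphic to $\Pi_{|A\cap B|}\times\Pi_2\times\Pi_{n-|A\cup B|}$, and both traverse between any two such edges via a chain of square $2$-faces sharing parallel sides. Your write-up is somewhat more explicit in unpacking why single-coordinate moves in the first and third factors produce genuine square faces and in tracking orientations across the chain, but the underlying argument is the same.
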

\begin{proof}
  Let $(A,B)$ be a pair of equicardinal subsets of $[n]$ with $|A \triangle B| = 2$.
  By \Cref{faces-permutahedron}, the vertices given by the full flags extending $(A \cap B, A \cup B)$ form a face isomorphic with $\Pi_{|A \cap B|} \times \Pi_{2} \times \Pi_{n - |A \cup B|}$.
  %In particular, any two of these edges are connected by a sequence of edges on this face, where each edge gives rise to a square by taking the product with an edge corresponding to $(A,B)$.
	In particular, any two of these edges are connected by a sequence of squares on this face, where each square is composed by a pair of parallel edges corresponding to $(A,B)$ and are adjacent to each other in this sequence along these edges.
  By the condition on the squares, all the edges in $G^\pm_{\Pi}(n)$ corresponding to $(A,B)$ have the same $g$-value.
  So we take that number to define $g'$ on the arc of $G^\pm_{\Delta}(d,n)$ corresponding to $(A,B)$, where $d = |A| = |B|$.
\end{proof}

The following is our first main result.

\begin{theorem}
	\label{thm:regular-2-skeleton}
	A height function $w \colon \Sym(n) \to \RR$ induces a permutahedral subdivision if and only if it induces a permutahedral subdivision of the 2-skeleton of $\Pi_n$. That is:
	\begin{enumerate}[leftmargin=*, widest=XXXX]
		\item[(HEX)] for every hexagon $abcdef$ (labeled cyclically) in the 2-skeleton of $\Pi_n$, we have
		\begin{enumerate}
			\item[(HXE)] $w(a)+w(c)+w(e) = w(b)+w(d)+w(f)$, 
			\item[(HXM)] the maximum in $\max(w(a)+w(d), w(b)+w(e), w(c)+w(f))$ is attained twice;
		\end{enumerate}
		\item[(SQR)] for every square face $abcd$ of $\Pi_n$ (labeled cyclically): $w(a)+w(c) = w(b)+w(d)$.
	\end{enumerate}
\end{theorem}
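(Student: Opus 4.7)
\emph{Forward direction.} Suppose $w$ induces a permutahedral subdivision of $\Pi_n$. The restriction to each 2-face $F$ is still permutahedral, because any cell $C$ meets $F$ in a face of $C$, which remains a generalized permutahedron. A square 2-face $\Pi_2 \times \Pi_2$ admits only the trivial permutahedral subdivision (a diagonal is not parallel to any $e_i - e_j$), and coplanarity of four lifted points obeying $a+c=b+d$ forces the same linear relation on heights, yielding (SQR). A hexagonal 2-face $\Pi_3$ admits precisely four permutahedral subdivisions: the trivial one and three ``two-trapezoid splits'' along long diagonals. All four cases satisfy (HXE) by direct inspection of the affine relations of $\Pi_3$, and they satisfy a matching form of (HXM): the maximum is attained thrice in the trivial case and exactly twice in each split case, with the losing long-diagonal sum identifying which diagonal is used.

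\emph{Backward direction.} For the converse, assume (HXE), (HXM), (SQR). Define $g(\sigma,\sigma') = w(\sigma') - w(\sigma)$ on the directed edges of $\Pi_n$. Condition (SQR) is exactly the hypothesis of Lemma~\ref{lem:squares-function}, so $g$ descends to well-defined functions $g'_d$ on the directed edges of each hypersimplex $\Delta(d,n)$. The crucial observation is that the six edges of each hexagonal 2-face of $\Pi_n$ (determined by a prefix $S$ of size $d-1$ and a size-$3$ block $\{i,j,k\}$) alternate cyclically between two types: ``$F_d$-change'' edges, which trace out the triangle $\{Si,Sj,Sk\}$ in $\Delta(d,n)$, and ``$F_{d+1}$-change'' edges, which trace out the triangle $\{Sij,Sik,Sjk\}$ in $\Delta(d+1,n)$. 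Rewriting (HXE) as $\sum_{i=1}^{6} (-1)^i w(\sigma_i) = 0$ gives exactly the closedness of $g'_d$ around the first triangle, and closedness of $g'_{d+1}$ around the second then follows from the telescoping identity around the whole hexagon. Every triangle of every $\Delta(d,n)$ arises in this way, and since all 2-faces of $\Delta(d,n)$ are triangles (the face lattice realizes 2-faces as either $\Delta(1,3)$ or $\Delta(2,3)$), Proposition~\ref{prop:potential} produces potentials $\mu_d\colon \binom{[n]}{d} \to \RR$ with $g'_d(A,B) = \mu_d(B) - \mu_d(A)$.

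Setting $A = \mu_d(Si) + \mu_{d+1}(Sjk)$, $B = \mu_d(Sj) + \mu_{d+1}(Sik)$, $C = \mu_d(Sk) + \mu_{d+1}(Sij)$ for each hexagon, the three long-diagonal sums of $w$ decompose, up to a common constant, as $A+C$, $B+C$, $A+B$; (HXM) therefore translates into ``$\min(A,B,C)$ attained at least twice'', which is the tropical incidence relation for $(\mu_d,\mu_{d+1})$. By Theorem~\ref{thm:1step}, each $\mu_d$ is a valuated matroid, so $(\mu_1,\ldots,\mu_n)$ is a valuated flag matroid. Integrating $g$ along a spanning tree rooted at a reference vertex yields $w(\sigma) = \sum_d \mu_d(F_d(\sigma)) + \mathrm{const}$ for all $\sigma$, which equals the compression $u(\sigma)$ because the decomposition of the vertex vector $\sum_d e_{F_d(\sigma)}$ is unique at each $\sigma \in \Sym(n)$; Theorem~\ref{thm:equivalenc-permutahedra-flag} then concludes that $w$ induces a permutahedral subdivision. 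The main obstacle is the combinatorial bookkeeping identifying each hexagonal 2-face of $\Pi_n$ with the unique pair of triangles (one in $\Delta(d,n)$ and one in $\Delta(d+1,n)$) whose closedness it encodes, and in particular verifying the cyclic alternation of edge types.
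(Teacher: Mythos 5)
Your proof is correct and takes essentially the same route as the paper: deduce (SQR) and (HEX) from the restriction of the subdivision to $2$-faces; then, in the converse direction, turn (SQR) via Lemma~\ref{lem:squares-function} into edge functions on each $\Delta(d,n)$, use (HXE) for closedness around triangles, integrate via Proposition~\ref{prop:potential}, reinterpret (HXM) as the tropical incidence relations, and conclude with Theorem~\ref{thm:1step} and Theorem~\ref{thm:equivalenc-permutahedra-flag}. The extra points you make explicit (cyclic alternation of edge types around a hexagon, all $2$-faces of $\Delta(d,n)$ being triangles, uniqueness of the decomposition $\sigma=\sum_d e_{F_d(\sigma)}$ at a permutation vertex) are all correct and are used implicitly in the paper's argument.
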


\begin{proof}
	A subdivision of a polytope defines a subdivision on each face.
	Therefore, as each cell in the subdivision is a subpermutahedron, the 2-faces are also subdivided into subpermutahedra. 
	Since squares cannot be subdivided any further, the second condition follows.
	To show the first condition, observe that at least four cyclically consecutive vertices have to lie in the same hyperplane, the other two on or above. 
	By relabeling and subtracting a linear form, we can assume that $w(a) = w(b) = w(c) = w(d) = 0$ and that $w(e) = w(f) \geq 0$. 
	This implies that the conditions on the $2$-skeleton are necessary.
	
	For the converse, suppose that $w$ satisfies the conditions (HEX) and (SQR).
	We will show that $w$ can be decomposed into a flag of valuated matroids $(g_1,\dots,g_n)$ such that $w$ is the result of their compression. 
	Then, by \Cref{thm:equivalenc-permutahedra-flag}, $w$ is a valuated permutahedron.
	
	On each directed edge $(a,b)$ in $G^\pm_\Pi(n)$, we let $h'(a,b) = w(a) - w(b)$ as above.
	Using (SQR), by \Cref{lem:squares-function}, this defines a function $h_d$ on the directed edges of $G^\pm_{\Delta}(d,n)$ for any $d$.
	We fix an arbitrary vertex $u$ of $\Pi_n$, which corresponds to a full flag $\mathcal{G} = (G_1, \dots, G_n)$ in $[n]$.
	Recall that a hexagon corresponds to the flags extending $(S,S {} ijk)$ for some $S$ and $i,j,k \in [n] \setminus S$.
	With this notation the condition (HXE) amounts to
	\[
	h(S {} i, S {} j) + h(S {} j, S {} k) + h(S {} k, S {} i) =
	h(S {} ij, S {} jk) + h(S {} jk, S {} ik) + h(S {} ik, S {} ij) = 0 \,.
	\]
	It follows that $h$ sums to zero along each oriented $2$-face of $\Delta(d,n)$ (i.e., a triangle).
        Thus \Cref{prop:potential} yields a function $g$ on the vertices of $\Delta(d,n)$ with $\sum_{d=1}^{n}g(G_d) = w(u)$.
	
\begin{figure}[th]
  \centering
  \includegraphics[height=.25\textwidth]{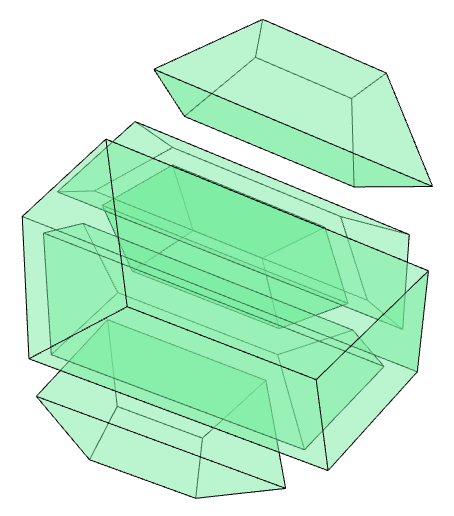}\qquad
  \includegraphics[height=.25\textwidth]{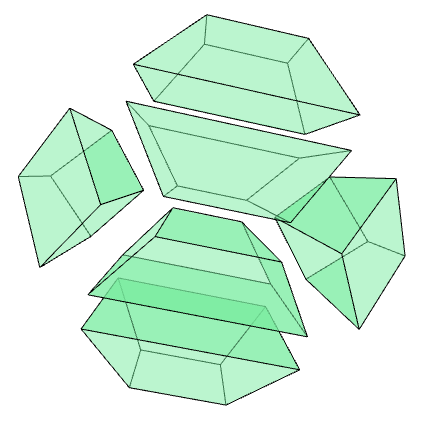}
  \caption{Two permutahedral subdivisions inducing the same pattern on the 2-skeleton.}
  \label{fig:2-skeleton}
\end{figure}

	Let $v$ be an arbitrary vertex of $\Pi_n$, which corresponds to a full flag $\cF = (F_1, \dots, F_n)$ in $[n]$.
	As a path from $u$ to $v$ decomposes into paths from $G_d$ to $F_d$ on $\Delta(d,n)$, we have $w(v)=\sum_{d = 1}^{n} g_d(F_d)$. 
	For a hexagon face described by the flags extending $(S,S {} ijk)$ with some $S \subset [n]$ and $i,j,k \in [n] \setminus S$, the sum representation of $w$ agrees in all terms with $d \leq |S|$ and $d \geq |S| + 3$.
	Hence, by (HXM), the maximum is attained at least twice in 
	\begin{equation*}
		\begin{aligned}
			\max\bigl(\, g(S {} i) + g(S {} ij) + g(S {} k) + g(S {} jk),\; & g(S {} j) + g(S {} jk) + g(S {} i) + g(S {} ik), \\
			&g(S {} k) + g(S {} ik) + g(S {} j) + g(S {} ij) \, \bigr) \;.
		\end{aligned}
	\end{equation*}
	
	Subtracting $g(S {} i) + g(S {} j) + g(S {} k) + g(S {} ij) + g(S {} ik) + g(S {} jk)$ and multiplying by $-1$ yields that $\min(g(S {} j) + g(S {} ik), g(S {} k) + g(S {} ij), g(S {} i) + g(S {} jk))$
	attains the minimum twice.
	This is the 3-term incidence relation.
	Summarizing, we have proven that $(g_1,\dots,g_n)$ satisfy the 1-step Pl\"ucker relations.
	By \Cref{thm:1step} all of the $g_d$ are valuated matroids and so $(g_1,\dots,g_n)$ is a valuated flag matroid.
\end{proof}

\begin{example}\label{ex.2}
  We continue \Cref{ex.1}.
  Define $w:\Sym(3)\to \RR$ to be the compression of $(\mu_{1},\mu_{2},\mu_{3})$ restricted to the vertices of $\Pi_{3}$.
One can check that
\[ w((1,2,3))+w((3,1,2))+w((2,3,1))=w((2,1,3))+w((1,3,2))+ w((3,2,1))=10 \]
and
\[ \max\left\{w((1,2,3))+w((3,2,1)),w((2,1,3))+w((2,3,1)),w((1,3,2))+w((3,1,2))\right\}=7 \]
is attained twice.
That is, $w$ meets the criteria described in \Cref{thm:regular-2-skeleton} to induce permutahedral subdivisions on hexagons.
\end{example}

\section{Explicit computations}
\label{sec:computations}
The secondary fan of a point configuration stratifies the height functions by the combinatorial type of the regular subdivisions induced.
By taking closures and intersecting with the unit sphere (in the space of height functions) this yields a decomposition of that sphere into spherical polytopes.
Restricting to permutahedral subdivisions yields a subfan and spherical polytopal subcomplex.
\begin{example}
  We sketch the situation for the regular hexagon $\Pi_3$.
  Its secondary fan, $\Sigma$, is a complete simplicial fan which is $3$-dimensional, modulo linealities.
  In fact, it is the normal fan of a $3$-dimensional associahedron.
  The 14 maximal cones $\sigma$ correspond to the vertices of the associahedron and also to the labeled triangulations of $\Pi_3$; cf.\ \cite[Figure~1.15]{Triangulations}.
  There are exactly three nontrivial permutahedral subdivisions of $\Pi_3$, which arise from three splits through opposite pairs of vertices of $\Pi_3$; cf.\ \cite{HerrmannJoswig:2008}.
  Each such split has a $1$-dimensional secondary cone, i.e., it forms a ray of $\Sigma$.
  Consequently, the spherical polytopal complex of permutahedral subdivisions of $\Pi_3$ comprises three isolated points in $\Sph^2$; see Figure~\ref{fig:3points}.
\end{example}

\begin{figure}[th]
  \centering
  \includegraphics[width=.2\textwidth]{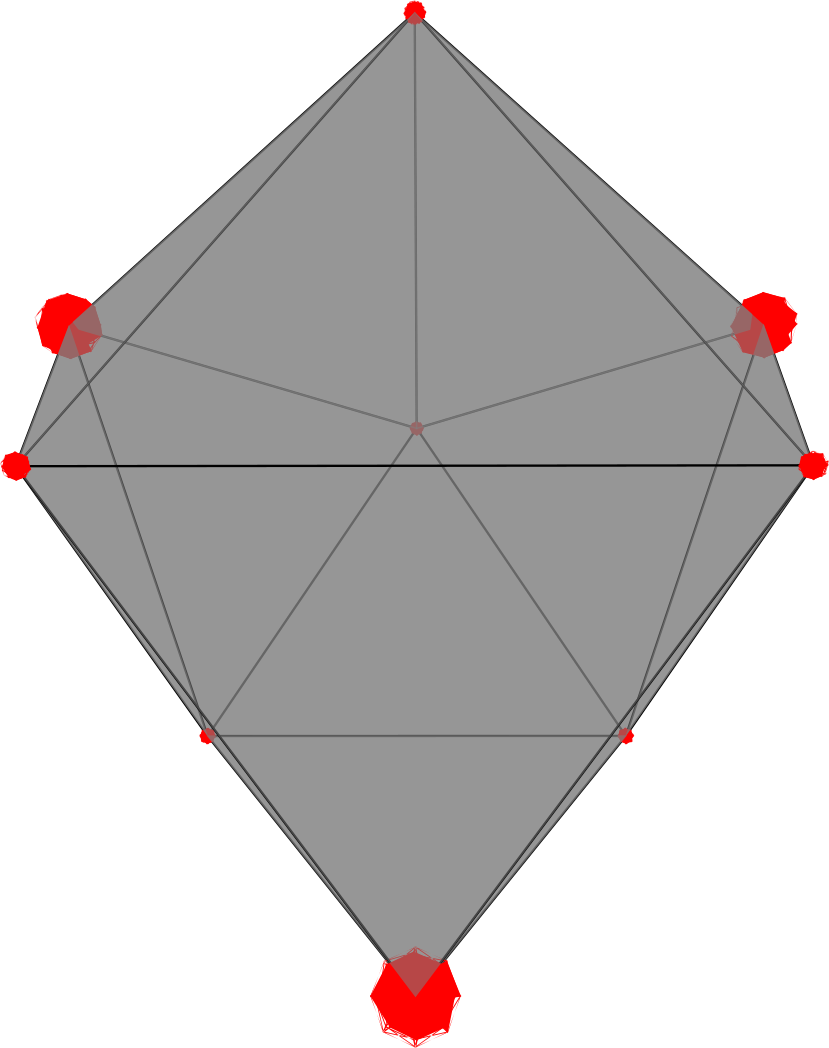}
  \caption{Locating the three points corresponding to the permutahedral subdivisions of $\Pi_3$ in the dual of the $3$-dimensional associahedron.}
\label{fig:3points}
\end{figure}

Tropicalizations of flag varieties for $n=4$ and $n=5$ have been computed by Bossinger, Lamboglia, Mincheva and Mohammadi \cite{BLMM:2017}.
Here we use the (HEX) and (SQR) conditions to compute the height vectors in $\RR^{24}$ that induce permutahedral subdivisions of the three dimensional regular permutahedron $\Pi_4$, employing \polymake \cite{DMV:polymake}.
That is, we intersect the eight tropical hypersurfaces corresponding to hexagonal facets with fourteen linear subspaces corresponding to hexagons and squares in the two-skeleton of $\Pi_4$.
We obtain a six-dimensional polyhedral fan, which we denote $\Phi_4$, and which has a three-dimensional lineality subspace.
The $f$-vector of the fan $\Phi_4$ reads $(20,76,75)$.
Out of the $75$ maximal cones $72$ are simplicial cones, i.e., they are spanned by three rays. 
The remaining three cones are spanned by four rays each.
In the secondary fan of $\Pi_4$, such a cone would be triangulated into two distinct maximal cones, $C_1$ and $C_2$.
The subdivisions of $\Pi_4$ corresponding to $C_1$ and $C_2$ are distinct, however, they induce the same permutahedral subdivision on the $2$-skeleton, \Cref{fig:2-skeleton}.
In other words, the subfan from the secondary fan of $\Pi_n$ corresponding to permutahedral subdivisions is not the same as the fan induced by (HEX) and (SQR); the latter is coarser. 
This contrasts with \cite[Theorem 14]{OPS:2018} where the Dressian has the same fan structure whether taken from the Pl\"ucker relations or as a subfan of the secondary fan of the matroid polytope.

Observe that the secondary fan refinement of $\Phi_4$ has $72 + 2\cdot 3 = 78$ maximal cones, which is the number reported in \cite{BLMM:2017}.
We summarize our findings as follows; see \Cref{fig:flag-dressian-4} for a visualization.
The fundamental group was computed with \OSCAR v0.9.0 \cite{OSCAR,OSCAR-book}.

\begin{proposition}
  The link of the fan $\Phi_4$ is a pure $2$-dimensional spherical polytopal complex with $f$-vector $(20,76,75)$.
  That complex is simply connected with top integral homology isomorphic to $\ZZ^{18}$.
\end{proposition}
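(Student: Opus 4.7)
The plan is purely computational, extending the \polymake enumeration already sketched above to a topological analysis in \OSCAR. First I would realize $\Phi_4 \subset \RR^{24}$ by combining the eight tropical hypersurfaces from (HXM) on the hexagonal $2$-faces of $\Pi_4$ with the fourteen linear equations from (HXE) and (SQR), and then ask \polymake for its cones. This yields the $20$ rays, the $76$ two-dimensional cones modulo lineality, and the $75$ maximal cones of dimension $3$, giving the $f$-vector $(20,76,75)$ and pureness. Since three of these maximal cones are spanned by four rays, the link is a polytopal (not simplicial) $2$-complex on the sphere with three quadrilateral faces and $72$ triangular faces.

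To obtain the homotopical invariants I would first triangulate each quadrilateral by a diagonal, producing a pure $2$-dimensional simplicial complex $K$ with $f$-vector $(20,79,78)$ and Euler characteristic $\chi(K) = 19$ (independent of the diagonals chosen). Since $H_2$ of any $2$-complex is torsion-free, and $K$ is manifestly connected, once $\pi_1(K) = 1$ is established the identity $\chi = 1 - \mathrm{rk}\, H_1 + \mathrm{rk}\, H_2$ forces $H_2(K) \cong \ZZ^{18}$. I would prove triviality of $\pi_1$ by importing $K$ into \OSCAR, picking a spanning tree of its $1$-skeleton, forming the edge-path presentation with $60$ generators (one per non-tree edge) and $78$ relators (one per triangle, exactly as in the combinatorial procedure underlying \Cref{prop:potential}), and simplifying via Tietze transformations.

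The main obstacle is this last step: Tietze simplification is a heuristic without termination guarantees on presentations of this size. If it stalls, my fallback is to run discrete Morse theory on $K$ in \polymake and exhibit an acyclic matching whose set of critical $1$-cells is empty; the resulting CW-model of $K$ with only $0$-cells and $2$-cells is automatically simply connected, which would settle the proposition without relying on group-theoretic simplification.
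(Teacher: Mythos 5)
Your proposal takes essentially the same purely computational route as the paper, which also realizes $\Phi_4$ in \polymake by intersecting the eight tropical hypersurfaces from (HXM) with the fourteen linear equations from (HXE)/(SQR), reads off the $f$-vector $(20,76,75)$ and the three non-simplicial maximal cones, and verifies simple connectivity via \OSCAR. The only cosmetic difference is that you deduce $H_2\cong\ZZ^{18}$ from $\chi = 20 - 79 + 78 = 19$ together with torsion-freeness of the top homology of a $2$-complex, whereas the paper leaves the homology computation to the software.
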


\begin{figure}[th]
  \centering
  % $F = load "dressian_fan.poly";
  % $F->GRAPH->VISUAL;
  % save as svg; load into inkscape; resize document; save as pdf
  \includegraphics[width=.45\textwidth]{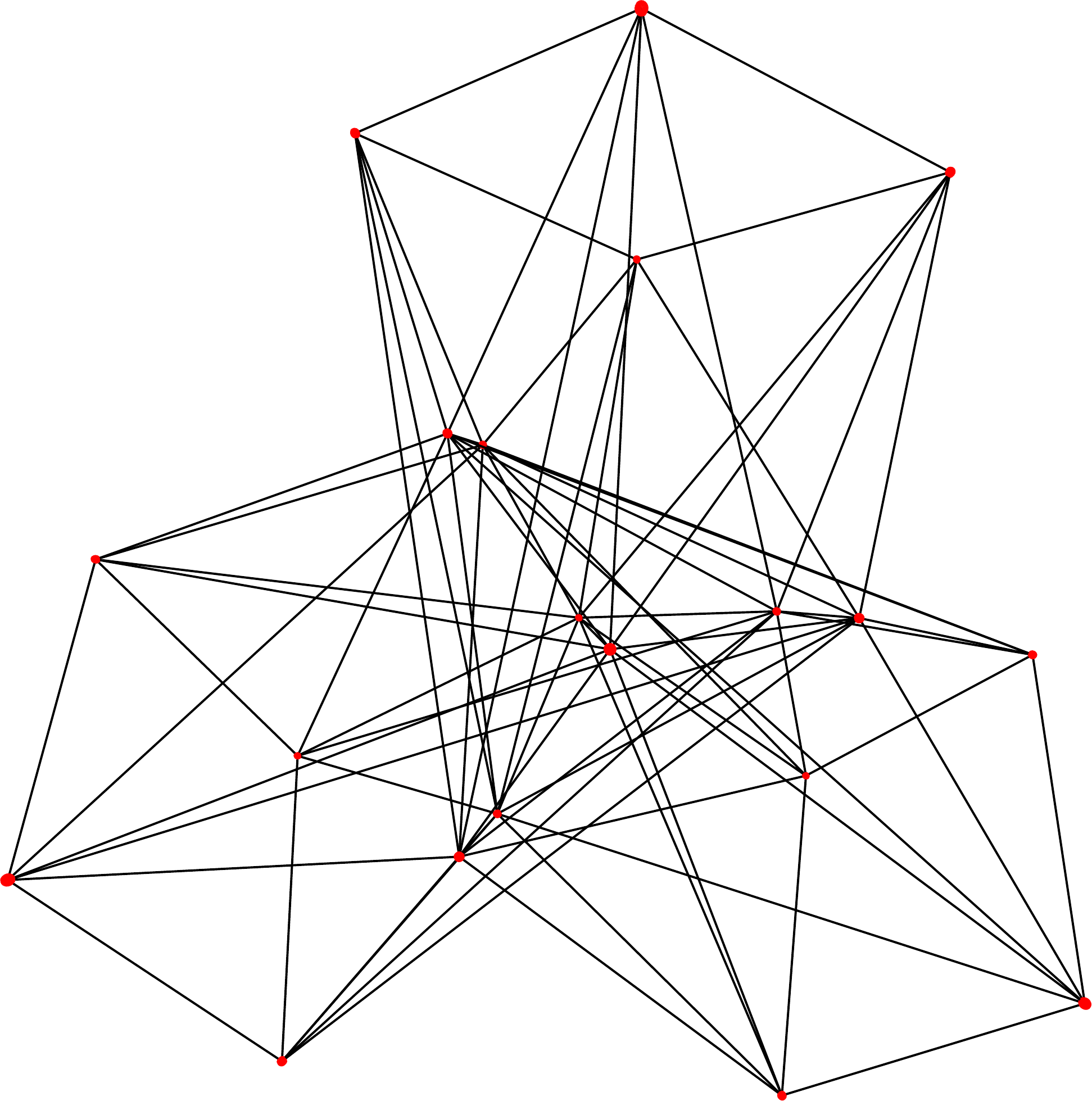}
  \caption{Graph of the link of the fan $\Phi_4$.  The three quadrangles sit at the \enquote{outside}.}
\label{fig:flag-dressian-4}
\end{figure}

Using the (HEX) and (SQR) conditions alone to compute the entire collection of points inducing permutahedral subdivisions of $\Pi_n$ seems to be intractable for $n>4$.

\section{Total positivity} 
The positive tropical Grassmannian $\TGr^+(d,n)$ is the tropicalization of the positive part of the Grassmannian $\Gr^+(d,n)$ consisting of linear spaces over the reals with all Pl\"ucker coordinates positive; see \cite{SpeyerWilliams:2005,SpeyerWilliams:2021,ArkaniHamedLamSpradlin:2021}.
Following~\cite[Equation~(1.1)]{ArkaniHamedLamSpradlin:2021}, a valuated matroid $v$ is \emph{positive} if it fulfills the three-term positive tropical Pl\"ucker relations; i.e., for every $S$ and $i < j < k < l$ not in $S$, we have
\begin{equation} \tag{3TPR+} \label{eq:positivepluecker}
	v(Sik) + v(Sjl) = \min(v(Sij)+v(Skl), v(Sil)+v(Sjk)) \;.
\end{equation}

The subsets of $\Dr(d,n)$ and of $\FlDr(n)$ for which all valuated matroids fulfill \eqref{eq:positivepluecker} are the \emph{positive Dressian} and the \emph{positive flag Dressian}, respectively. 

Similarly, the positive part of the (full) flag variety can be defined as the space of full flags $\cL=(L_1, \dots, L_n)$ where $L_d \in \Gr^+(d,n)$.
Let us formulate an \enquote{untropicalized} analog of one of the directions of \Cref{thm:regular-2-skeleton}.
The Pl\"ucker coordinates of a linear space $L_d\subseteq \KK^n $ over some field $\KK$ can be organized into a polynomial: 
\[
f_{L_d}(x) = \sum_{B\in \binom{[n]}{d}} p_B \prod_{i\in B} x_i \in \KK[x_1,\dots,x_n] \;,
\]
whose support is the base polytope of the matroid $M(L_d)$.
As a side remark, note that in~\cite{Purbhoo:2018} such a polynomial was shown to encode a totally nonnegative linear space if and only if it is stable.
Given the flag $\cL$, the coefficients of these polynomials satisfy
\begin{equation} \label{eq:classical-incidence}
p_{Si}p_{Sjk} - p_{Sj}p_{Sik} + p_{Sk}p_{Sij} = 0 
\end{equation}
for all $S\subseteq [n]$ and $i<j<k$ not in $S$.
To give an analog of \eqref{eq:positivepluecker}, we work our way from the equation above to reach the condition (HXM).

The product $f_{L_1}\cdots f_{L_n}$ is a polynomial whose support is the flag matroid base polytope $M(L_1) + \dots+ M(L_n)$. 
Suppose this polytope is the regular permutahedron $\Pi_n$ and let $q_\sigma$ be the coefficient of $x_1^{\sigma(1)}\cdots x_n^{\sigma(n)}$ in $f_{L_1}\cdots f_{L_n}$.
Hence, for $n=3$, we have that $q_{\sigma} = p_{\sigma^{-1}(3)}p_{\sigma^{-1}(2)\sigma^{-1}(3)}p_{\sigma^{-1}(1)\sigma^{-1}(2)\sigma^{-1}(3)}$ yielding with~\eqref{eq:classical-incidence} then 
\[
\begin{aligned}
	q_{(321)}q_{(123)}&q_{(231)}q_{(213)} - q_{(231)}q_{(213)}q_{(312)}q_{(132)} +  q_{(312)}q_{(132)}q_{(321)}q_{(123)} \\
	&= p_1p_2p_3p_{12}p_{13}p_{23}p_{123}^4 \left(p_{12}p_3-p_{13}p_2+p_{23}p_1 \right) = 0 \;.
\end{aligned}
\]
Notice as well that $p_1p_2p_3p_{12}p_{13}p_{23} = q_{(123)}q_{(231)}q_{(312)}=q_{(321)}q_{(213)}q_{(132)}$.
From here we can deduce the following relations that must be satisfied for any $n$:
\begin{itemize}
	\item For every hexagonal face $abcdef$ (labeled cyclically) of $\Pi_n$ we have $q_aq_cq_e = q_bq_dq_f$ and $q_bq_cq_eq_f -q_aq_cq_bq_f+ q_aq_bq_dq_e = 0$. By \cite[Section 4.3]{TsukermanWilliams} hexagons are Bruhat interval polytopes, and the negative term  $q_aq_cq_bq_f$ is the one not containing the lowest and largest elements of this interval.
	\item For every square face $abcd$ (labeled cyclically) of $\Pi_n$ we have $q_aq_c = q_bq_d$.
\end{itemize}

%Observing these relation for $\Pi_3$ we have that 
%\begin{align*}
%p_{12}p_3-p_{13}p_2+p_{23}p_1 &= p_1p_2p_3p_{12}p_{13}p_{23}\left( \frac{1}{p_1p_2p_{13}p_{23}} -\frac{1}{p_1p_3p_{12}p_{23}} +\frac{1}{p_2p_3p_{12}p_{13}} \right) \\
%&= p_1p_2p_3p_{12}p_{13}p_{23} \left(\frac{1}{q_{(312)}q_{(132)}} -  \frac{1}{q_{(321)}q_{(123)}} + \frac{1}{q_{(231)}q_{(213)}} \right).
%\end{align*}
%
%Similarly, any hexagonal face of $\Pi_n$ can be reduced to this case.

This suggests the following positivity condition for the tropicalization of the positive part of the flag variety; see \Cref{ex:bruhat}: 
\begin{enumerate}[leftmargin=*, widest=XXXXX]
	\item[(HXM+)] \label{eq:positiveflag} For every (cyclically labeled) hexagon $abcdef$ of $\Pi_n$, where $b$ is the lowest permutation in the Bruhat order, $w(b)+w(e) = \max(w(a)+w(d), w(c)+w(f))$.
\end{enumerate}

We will need the following positivity adaptation of \Cref{thm:1step}:
\begin{lemma}\label{lemma:1step_positive}
	Let $\nu : \binom{[n]}{d} \to \RR$ and $\mu : \binom{[n]}{d+1} \to \RR$ be any two functions such that for every $S\in \binom{[n]}{d-1}$ and $i<j<k \notin S$, 
	\[
	\nu(Sj)+\mu(Sik) = \min(\nu(Si)+\mu(Sjk),\nu(Sk)+\mu(Sij)) \;.
	\]
	Then $\nu\in \TGr^+(d,n)$ and $\mu \in  \TGr^+(d+1,n)$.
\end{lemma}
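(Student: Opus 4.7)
The plan is to first invoke \Cref{thm:1step} and then upgrade to positivity. The hypothesis is strictly stronger than the 3-term tropical incidence relation \eqref{eq:3-term-incidence}: ``middle term equals the minimum of the outer two'' certainly implies ``the minimum is attained twice''. So \Cref{thm:1step} already gives $\nu \in \Dr(d,n)$ and $\mu \in \Dr(d+1,n)$, and what remains is to establish the positive three-term Pl\"ucker relation \eqref{eq:positivepluecker} for each.

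For $\mu$, fix $S \in \binom{[n]}{d-1}$ and $i < j < k < l$ not in $S$. Following the opening move in the proof of \Cref{thm:1step}, I would translate both $\nu$ and $\mu$ by the linear form $\sum_{x \in \{i,j,k,l\}} \nu(Sx)\,e_x$; a direct check shows that this shifts every term of the hypothesis (for any triple drawn from $\{i,j,k,l\}$) by the same constant, and also shifts each of the three terms of the positive Pl\"ucker relation by the same constant, so both are preserved. One may therefore assume $\nu(Si)=\nu(Sj)=\nu(Sk)=\nu(Sl)=0$. Applying the hypothesis to the four ordered triples $ijk$, $ijl$, $ikl$, $jkl$ then yields
\begin{align*}
\mu(Sik) &= \min(\mu(Sij),\mu(Sjk)), & \mu(Sjl) &= \min(\mu(Sjk),\mu(Skl)), \\
\mu(Sil) &= \min(\mu(Sij),\mu(Sjl)), & \mu(Sil) &= \min(\mu(Sik),\mu(Skl)).
\end{align*}
Setting $A := \mu(Sij)$, $D := \mu(Sjk)$, $F := \mu(Skl)$, these condense to $\mu(Sik)=\min(A,D)$, $\mu(Sjl)=\min(D,F)$ and $\mu(Sil)=\min(A,D,F)$. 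The desired positive Pl\"ucker relation now collapses to the purely tropical identity
\[
\min(A,D) + \min(D,F) \;=\; \min\bigl(A+F,\; \min(A,D,F) + D\bigr),
\]
which is verified by a short case analysis on the relative order of $A$, $D$, $F$ (three essentially different cases, using the $A \leftrightarrow F$ symmetry of both sides).

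For $\nu$ I would argue by Grassmann duality. The dual pair $(\mu^*,\nu^*)$ with $v^*(T) := v([n]\setminus T)$ has ranks $(n{-}d{-}1,\, n{-}d)$, and the index substitution $S \mapsto [n]\setminus(S \cup \{i,j,k\})$ shows that the incidence hypothesis for $(\nu,\mu)$ becomes exactly the incidence hypothesis for $(\mu^*,\nu^*)$. Applying the argument above to this dual pair gives that $\nu^* \in \TGr^+(n-d,n)$. Since the positive Pl\"ucker relation is self-dual under complementation (the six relevant Pl\"ucker coordinates get permuted among themselves in the natural way, which I would spell out with one line of substitution), this forces $\nu \in \TGr^+(d,n)$.

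The main obstacle is largely bookkeeping: one has to verify that the translation behaves correctly for both the hypothesis and the positive Pl\"ucker relation, and that Grassmann duality carries both hypothesis and conclusion cleanly across. The tropical case analysis itself is mechanical, and the real conceptual content is the observation that the four triple relations among $\{i,j,k,l\}$ force the three ``central'' Pl\"ucker coordinates $\mu(Sik)$, $\mu(Sjl)$, $\mu(Sil)$ to be determined as minima of $\{A,D,F\}$, after which positivity is automatic.
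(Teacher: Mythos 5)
Your proof is correct, but it takes a more explicit route than the paper's. Both begin with the same translation so that $\nu$ vanishes on $Si,Sj,Sk,Sl$. The paper then argues by contradiction: since $\mu$ satisfies the ordinary 3-term Pl\"ucker relations by \Cref{thm:1step}, a failure of \eqref{eq:positivepluecker} forces $\mu(Sik)+\mu(Sjl)>\mu(Sij)+\mu(Skl)$, and this is refuted by just the two triple relations $\mu(Sik)\leq\mu(Sij)$ (from $ijk$) and $\mu(Sjl)\leq\mu(Skl)$ (from $jkl$); note that \Cref{thm:1step} is genuinely load-bearing there, as the ordinary Pl\"ucker relation is what excludes the case that $\mu(Sik)+\mu(Sjl)$ is the strict minimum. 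You instead extract all four triple relations, solve $\mu(Sik),\mu(Sjl),\mu(Sil)$ as minima of $A=\mu(Sij)$, $D=\mu(Sjk)$, $F=\mu(Skl)$, and verify the tropical identity $\min(A,D)+\min(D,F)=\min\bigl(A+F,\min(A,D,F)+D\bigr)$ by cases. This makes the argument for $\mu$ self-contained (it reproves the tropical Pl\"ucker relation as a byproduct, so your initial appeal to \Cref{thm:1step} is not strictly needed for $\mu$), at the cost of the longer case analysis. You also spell out the Grassmann duality $v^*(T):=v([n]\setminus T)$ for $\nu$ and check that it transports both the hypothesis and \eqref{eq:positivepluecker}; the paper's proof only addresses $\mu$ and leaves the $\nu$ side implicit, so this is a worthwhile completion.
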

\begin{proof}
	By \Cref{thm:1step} we know already that $\nu$ and $\mu$ are valuated matroids. 
	Again we can do a translation to assume $\nu(Si)=\nu(Sj)=\nu(Sk)=\nu(Sl)=0$.
	Suppose $\mu$ does not satisfy \eqref{eq:positivepluecker}.
	Then w.l.o.g.~$\mu(Sik)+\mu(Sjl)>\mu(Sij)+\mu(Skl)$. 
	However, by the assumption of the lemma, $\mu(Sik)\leq \mu(Sij)$ and $\mu(Sjl) \leq \mu(Skl)$, a contradiction. 
\end{proof}

The following is a crucial step in our analysis.
\begin{lemma} \label{lem:positive-valuated-flag-matroid}
  Let $(w_1,\dots, w_n)$ be a valuated flag matroid whose compression fulfills (HXM+).
  Then it can be realized by a totally positive flag of linear spaces.
\end{lemma}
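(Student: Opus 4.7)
The plan is to reduce (HXM+) to the positivity hypothesis of Lemma~\ref{lemma:1step_positive} on each consecutive pair $(w_d,w_{d+1})$, and then pass from the resulting positive valuated matroids to a totally positive flag of linear spaces. A hexagonal $2$-face of $\Pi_n$ corresponds to some pair $(S,S\cup\{i,j,k\})$ with $i<j<k\in[n]\setminus S$, and the compression at its six vertices equals
\[
  w(\sigma) \;=\; \sum_{r\neq d,d+1} w_r(F_r) \;+\; \nu(F_d) + \mu(F_{d+1})
\]
with $\nu:=w_d$, $\mu:=w_{d+1}$, $d=|S|+1$. Under the convention of \Cref{faces-permutahedron}, the Bruhat-lowest vertex $b$ of the hexagon extends $Sk\subset Sjk$ and its opposite $e$ (the Bruhat-highest) extends $Si\subset Sij$, while the other two pairs of opposite vertices extend $\{Sj\subset Sjk,\;Si\subset Sik\}$ and $\{Sk\subset Sik,\;Sj\subset Sij\}$ respectively. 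Subtracting the common constant $\nu(Si)+\nu(Sj)+\nu(Sk)+\mu(Sij)+\mu(Sik)+\mu(Sjk)$ and negating, (HXM+) becomes exactly
\[
  \nu(Sj)+\mu(Sik) \;=\; \min\bigl(\nu(Si)+\mu(Sjk),\;\nu(Sk)+\mu(Sij)\bigr),
\]
which is precisely the hypothesis of Lemma~\ref{lemma:1step_positive}.

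Applying Lemma~\ref{lemma:1step_positive} to every triple $(S,d,i<j<k)$ yields simultaneously that each $w_d$ is a positive valuated matroid and that every consecutive pair $(w_d,w_{d+1})$ is a positive valuated matroid quotient. In particular, $(w_1,\dots,w_n)$ lies in the positive flag Dressian. By the theorem of Speyer--Williams~\cite{SpeyerWilliams:2021} and Arkani-Hamed--Lam--Spradlin~\cite{ArkaniHamedLamSpradlin:2021}, the positive Dressian coincides with the positive tropical Grassmannian, so each $w_d$ is individually the tropicalization of some $L_d\in\Gr^+(d,n)$.

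The main obstacle is then to assemble these individually positive realizations into a compatible chain $L_1\subset L_2\subset\dots\subset L_n$ in the totally non-negative flag variety. I expect to argue this inductively from the top down, starting with $L_n=\KK^n$: at step $d$, the positive-tropical incidence relation between $w_{d-1}$ and $w_d$ proved above should guarantee that a totally positive $L_{d-1}\subset L_d$ realizing $w_{d-1}$ exists, by a positivity-preserving lifting along the contraction/deletion description of valuated matroid quotients mentioned in the excerpt. Equivalently, one may appeal to the full-flag extension of Speyer--Williams/Arkani-Hamed--Lam--Spradlin, identifying the positive flag Dressian with the tropicalization of the totally positive flag variety; combined with the first two steps, this delivers the required flag directly. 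The hard point here is that positivity of each layer and positive incidence between layers are a priori separate requirements, but Lemma~\ref{lemma:1step_positive} yields both at once, so the inductive step is well-posed.
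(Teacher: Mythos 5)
Your first step is sound and indeed mirrors a part of the paper's argument: subtracting the common offset shows that (HXM+) at the hexagon associated with $(S, S\cup\{i,j,k\})$ is equivalent to the positive tropical incidence relation $\nu(Sj)+\mu(Sik)=\min\bigl(\nu(Si)+\mu(Sjk),\,\nu(Sk)+\mu(Sij)\bigr)$ for the consecutive pair $(\nu,\mu)=(w_{|S|+1},w_{|S|+2})$, and Lemma~\ref{lemma:1step_positive} then puts each $w_d$ in the positive Dressian. That much is fine.

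The gap is exactly where you flag it yourself: \enquote{the main obstacle is then to assemble these individually positive realizations into a compatible chain}. The result of Speyer--Williams and Arkani-Hamed--Lam--Spradlin applies to a single Grassmannian, so it gives you a totally positive $L_d\in\Gr^+(d,n)$ for each layer separately, with no reason for the various $L_d$ to be nested. Your route (b), \enquote{appeal to the full-flag extension \dots identifying the positive flag Dressian with the tropicalization of the totally positive flag variety}, is circular: that identification is precisely what this lemma (and \Cref{thm:positivity}) is establishing, and it cannot be invoked as known. Your route (a), the top-down inductive lifting, is not fleshed out and is far from automatic; passing from a positive incidence relation on valuations to an actual positive-preserving lift $L_{d-1}\subset L_d$ at the level of linear spaces is exactly the difficulty the lemma is meant to overcome. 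Stated plainly: you have shown that $(w_1,\dots,w_n)$ lies in the positive flag Dressian, but the lemma requires realizability, and membership in the (positive) flag Dressian does not deliver realizability by fiat.

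The paper's proof avoids the assembly problem by packaging the whole flag into a \emph{single} valuated matroid $\mu$ on the uniform matroid $U_{n,2n}$, via $\mu(B)=w_{|B\cap[n]|}(B\cap[n])$. It then checks the three-term positive Pl\"ucker relations for $\mu$ case-by-case in $|\{i,j,k,l\}\cap[n]|$: the $3$-element case is your positive incidence reduction, the $4$-element case is the positive Pl\"ucker relations in a single $w_d$ (which follow by \Cref{lemma:1step_positive}), and the $2$-element case requires a supermodularity inequality $w_{m+2}(Tij)+w_m(T)\geq w_{m+1}(Ti)+w_{m+1}(Tj)$, achieved by adding to each $w_d$ a constant that is a convex function of $d$ (this leaves the compression unchanged). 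With $\mu\in\TGr^+(n,2n)$ in hand, a \emph{single} application of the positive Dressian $=$ positive tropical Grassmannian theorem produces one totally positive $L\in\Gr^+(n,2n)$, and the flag is then obtained by projecting $L\cap\{x_{n+d+1}=\dots=x_{2n}=0\}$ to the first $n$ coordinates. Nestedness is automatic from this construction, which is exactly the idea your proposal is missing.
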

\begin{proof}
	Consider the valuated matroid $\mu$ on the uniform matroid $U_{n,2n}$ given by $\mu(B) = w_{|B\cap[n]|}(B\cap[n])$; this construction appears, e.g., in \cite{MurotaShioura-conjugate:2018}. 
	That $\mu$ is actually a valuated matroid depends on the right choice of $w_d$, which vary up to adding a constant. 
	To see this and that, moreover, $\mu \in \TGr^+(n,2n)$ we look at the 3-term Pl\"ucker relations \eqref{eq:positivepluecker}. 
	These are given by a set $S\in \binom{[2n]}{d-2}$ and $i,j,k,l \notin S$ and suppose $i< j<k< l$.
	We have the following cases:
	\begin{itemize}
		\item $|\{i,j,k,l\}\cap[n]| \le 1$. In this case all terms in the Pl\"ucker relation are equal.
		\item $|\{i,j,k,l\}\cap[n]| = 2$. Assuming $i,j \in [n]$, we have that $\mu(Sik) +\mu(Sjl)= \mu(Sil)+\mu(Sjk) = w_{m+1}(Ti) + w_{m+1}(Tj)$ and $\mu(Sij)+\mu(Skl) = w_{m+2}(Tij)+w_{m}(T)$, for $T = S \cap [n]$ and $m = |T|$.
                  Here is where we need to ensure supermodularity, i.e., $w_{m+2}(Tij)+w_{m}(T) \geq w_{m+1}(Ti) + w_{m+1}(Tj)$. 
      Since we have the freedom to choose each $w_d$ up to addition of a constant, we can achieve this by adding to each $w_d$ a suitably scaled convex function on $d$. 
		  As such a transformation preserves the compression $u$, we can ensure that $\mu(Sik) +\mu(Sjl)$ does attain the minimum.
                  
		\item $|\{i,j,k,l\}\cap[n]| = 3$. The positive Pl\"ucker relation here is equivalent to the positive 3-term incidence relation between $w_{|S\cap[n]|+1}$ and $w_{|S\cap[n]|+2}$.
		The proof of \Cref{thm:regular-2-skeleton} shows that the 3-term incidence relations are already implied by (HXM). % which are weaker than (HXM+).
		Further, (HXM+) strengthens this to imply \eqref{eq:positivepluecker}, since the terms of different sign agree under the correspondence of this implication, as seen in the discussion preceding the formulation of (HXM+).

		\item $|\{i,j,k,l\}\cap[n]| = 4$. In this case the Pl\"ucker relation is equivalent to a Pl\"ucker relation in $w_{|S\cap[n]|+2}$.
		By \Cref{lemma:1step_positive}, the positive variation of this relation follows from the last case. 
	\end{itemize}

        It was recently shown in~\cite{SpeyerWilliams:2021,ArkaniHamedLamSpradlin:2021} that the positive tropical Grassmannian equals the positive Dressian.
        Hence, $\mu$ is realizable by some totally positive subspace $L\in \Gr^+(n,2n)$. Consider $\pi$ to be the projection to the first $n$ coordinates and let $L_d= \pi(L\cap \{x_{n+d+1}=\dots = x_{2n} = 0\})$. The tropicalization of $L_d$ corresponds to the valuated matroid 
	\[
	\mu\backslash \{n+1,\dots, n+d\} / \{n+d+1,\dots, 2n\} = w_d \;.
	\]
	We have $L_1\subset \dots \subset L_n$, all of which are positive.
So $(w_1,\dots,w_d)$ is in the positive tropical flag variety.					
\end{proof}

Our second main result combines known facts with our Lemma~\ref{lem:positive-valuated-flag-matroid}.

\begin{theorem}\label{thm:positivity}
  Let $u \colon \Sym(n) \to \RR$.
  Then the following are equivalent:
	\begin{enumerate}
		\item The function $u$ is the compression of a valuated flag $(w_1,\dots, w_n)$ which can be realized by a totally positive flag of linear spaces.
		\item All polytopes in the subdivision induced by $w$ are Bruhat interval polytopes.
		\item The function $u$ satisfies (HXE) and (SQR) from \Cref{thm:regular-2-skeleton} as well as (HXM+).
	\end{enumerate}
\end{theorem}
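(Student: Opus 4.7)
The plan is to establish the equivalence via the cycle $(3)\Rightarrow(1)\Rightarrow(2)\Rightarrow(3)$, where the first two implications follow quickly from material already developed in the paper and the main work lies in the third.

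For $(3)\Rightarrow(1)$ the argument is essentially a direct appeal to existing results. Conditions (HXE) and (SQR) feed into \Cref{thm:regular-2-skeleton} to exhibit $u$ as the compression of some valuated flag matroid $(w_1,\dots,w_n)$. The remaining hypothesis (HXM+) is exactly what \Cref{lem:positive-valuated-flag-matroid} requires in order to realize $(w_1,\dots,w_n)$ by a totally positive flag of linear spaces.

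For $(1)\Rightarrow(2)$ I would use a standard degeneration. Let $\cL=(L_1,\dots,L_n)$ be a totally positive flag over $\puiseuxseries{\RR}{t}$ realizing $(w_1,\dots,w_n)$. For a point $y$ in the relative interior of a cell $C$ of the subdivision of $\Pi_n$, the initial degeneration $\operatorname{in}_y(\cL)$ is a flag over $\RR$ whose nonzero Pl\"ucker coordinates are leading coefficients of positive Puiseux series, hence positive reals; the remaining coordinates vanish. Thus $\operatorname{in}_y(\cL)$ lies in the totally nonnegative flag variety, its flag matroid base polytope equals $C$, and \Cref{prop:bruhat} then identifies $C$ as a Bruhat interval polytope.

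The main step is $(2)\Rightarrow(3)$. Since each cell is in particular a subpermutahedron, the subdivision is permutahedral and \Cref{thm:regular-2-skeleton} yields (HXE) and (SQR) automatically. To prove (HXM+), I restrict to an arbitrary hexagonal $2$-face $H$ of $\Pi_n$. The six vertices of $H$ form a coset of a parabolic subgroup of type $A_2$, so the induced Bruhat order on them is isomorphic to that of $\Sym(3)$; in particular it has a unique minimum $b_H$ and maximum $e_H$, which are opposite vertices of $H$. The cells of the subdivision of $H$ induced by $u$ are intersections $C\cap H$ with global cells $C$, hence faces of the Bruhat interval polytopes $C$; by the face description of Tsukerman--Williams such faces are again Bruhat interval polytopes. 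By (HXE) and (HXM) the induced subdivision of $H\cong\Pi_3$ is either trivial or a split along one of the three opposite-vertex diagonals, and the lower convex hull convention forces the split diagonal to be the one realizing the minimum of the three pair sums. A direct inspection on $\Pi_3$, already visible in \Cref{ex:bruhat}, shows that a split along the $(b_H,e_H)$ diagonal produces cells containing both the Bruhat minimum and maximum but not every element in between, so they cannot be Bruhat interval polytopes, whereas splits along either of the other two diagonals yield a pair of genuine Bruhat intervals $[b_H,\rho]$ and $[\rho',e_H]$. Therefore the split diagonal is not $(b_H,e_H)$, and (HXM) then forces $(b_H,e_H)$ to attain the maximum pair sum, which is precisely (HXM+). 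The hardest part will be this last implication: the subtlety is in combining the face-inheritance of Bruhat interval polytopes with the identification of Bruhat extrema as opposite vertices of $H$ and the correct sign convention linking pair sums to split diagonals, in order to extract the quantitative inequality (HXM+).
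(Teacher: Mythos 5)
Your proposal is correct and follows essentially the same route as the paper: the same cycle $(3)\Rightarrow(1)\Rightarrow(2)\Rightarrow(3)$ with the same ingredients — \Cref{thm:regular-2-skeleton} plus \Cref{lem:positive-valuated-flag-matroid} for $(3)\Rightarrow(1)$, degeneration to the residue field plus \Cref{prop:bruhat} for $(1)\Rightarrow(2)$, and the hexagonal-face/Bruhat-split analysis for $(2)\Rightarrow(3)$. Your treatment of $(2)\Rightarrow(3)$ spells out some details (face-inheritance of Bruhat interval polytopes via Tsukerman--Williams and the parabolic-coset structure of hexagonal $2$-faces) that the paper's argument leaves implicit in its reference to \Cref{fig:bruhat} and \Cref{ex:bruhat}.
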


\begin{proof}
	\enquote{$(1)\rightarrow (2)$}.
	Consider a totally positive flag $\cL=(L_1,\dots, L_n)$ that realizes $(w_1,\dots,w_n)$, e.g., over the field of Puiseux series, or a suitable extension for irrational coefficients; see \cite[\S2.6]{ETC}.
	For any polytope $P$ in the subdivision induced by $w$, we can obtain a flag $\tilde{\cL} =(\tilde{L}_1,\dots,\tilde{L}_n)$ that realizes the flag matroid corresponding to $P$ by taking a suitable rescaling of the flag $\cL$ and taking its quotient to the residue field, which is a subfield of $\RR$. 
	By construction, $\tilde{\cL}$ is in the totally non-negative part; yet some Pl\"ucker coordinates may vanish in the quotient. 
	Due to \Cref{prop:bruhat}, $P$ is a Bruhat polytope.
	
	\enquote{$(2) \rightarrow (3)$}.
	By \Cref{thm:regular-2-skeleton} and \Cref{prop:bruhat}, if the subdivision consists of Bruhat polytopes then (HXE) and (SQR) are satisfied.
	Suppose (HXM+) fails, so there is a hexagon where the maximum is not attained by the diagonal connecting the lowest and largest terms in the Bruhat order. 
	Then this diagonal appears in the subdivision and it is not a Bruhat interval polytope; see Figure~\ref{fig:bruhat}. 
	
	\enquote{$(3) \rightarrow (1)$}.
	By \Cref{thm:regular-2-skeleton} the function $u$ is the compression of the valuated flag matroid $(w_1,\dots, w_n)$.
        Hence, we can apply \Cref{lem:positive-valuated-flag-matroid}, which concludes this direction. 
\end{proof}

\begin{example}\label{ex.3}
We conclude \Cref{ex.2}. 
The Pl\"ucker coordinates computed in \Cref{ex.1} are nonnegative.
That is, the coefficient of $t^{\val(q)}$ is positive.
Therefore  $L_{1}\subset L_{2}\subset L_{3}$ corresponds to a point in the totally nonnegative flag variety \cite{Boretsky:2021}.
One can also check that $w$ satisfies (HXM+).
That is,
\[ \max\left\{w((1,2,3))+w((3,2,1)),w((2,1,3))+w((2,3,1)),w((1,3,2))+w((3,1,2))\right\} \]
attains its maximum at the $w((1,2,3))+w((3,2,1))$ entry, corresponding to the maximal and minimal elements of $\Sym(3)$ in the Bruhat order.
%This gives the subdivision shown in Figure~\ref{fig:encompassing}.
%Indeed, we can see that the cells of this subdivision are generalized permutahedra.
Then, by \Cref{prop:bruhat} and \Cref{thm:positivity}, $w$ subdivides $\Pi_{3}$ into Bruhat interval polytopes (see \Cref{ex:bruhat}).
\end{example}
	
\section{Concluding Remarks}

In \Cref{thm:regular-2-skeleton} we showed that the permutahedral subdivisions of a permutahedron can be recognized by looking at the $2$-skeleton.
Yet there are distinct permutahedral subdivisions of $\Pi_4$ with the same induced subdivision on the $2$-skeleton; cf.\ \Cref{fig:2-skeleton}.
We call the latter a \emph{permutahedral pattern} on the $2$-skeleton.
\begin{question}
  For a given permutahedral pattern on the $2$-skeleton of $\Pi_n$, what is the set of all permutahedral subdivisions of $\Pi_n$ inducing that pattern?
\end{question}

\smallskip

In Section~\ref{sec:computations} we computed the fans of permutahedral subdivisions of $\Pi_3$ and $\Pi_4$.
\begin{question}
  What can be said in general about the topological type of the spherical polytopal complex formed by the height functions of permutahedral subdivisions of $\Pi_n$?
\end{question}
Note that this complex is essentially the compression of the flag Dressian $\FlDr(n)$; see also \cite{BLMM:2017}.
There is reason to believe that the question above may be difficult.
First, the homology of the ordinary Dressians is only known for a few small cases; see \cite[\S5.4]{Tropical+Book}
Second, already the secondary fan of $\Pi_4$ is out of reach of current software tools such as \TOPCOM \cite{TOPCOM-paper}, \mptopcom \cite{JordanJoswigKastner:2018} or \Gfan \cite{gfan}.

\smallskip

While here we analyzed the regular permutahedra and their subpermutahedra, it is most natural to extend this line of work to generalized permutahedra.
\begin{question}
  How does the characterization in \Cref{thm:regular-2-skeleton} generalize to permutahedral subdivisions of arbitrary generalized permutahedra?
\end{question}

\acknowledgements
We are indebted to Lars Kastner for help with the computations.
Further, we want to thank Jonathan Boretsky, Federico Castillo, Daniel Corey, Christopher Eur, Melissa Sherman-Bennett and Ben Smith for their feedback and Lauren Williams for pointing us to~\cite{Boretsky:2021} and for sharing insights regarding Bruhat interval polytopes.
MJ and GL are grateful to the Hausdorff Research Institute for Mathematics, Bonn for their hospitality during the Trimester Program \enquote{Discrete Optimization}.

This paper is the full version of two extended abstracts (with the same title) for the conferences \enquote{34th International Conference on Formal Power Series \& Algebraic Combinatorics}, Bangalore, India, and \enquote{Discrete Mathematics Days 2022}, Santander, Spain.

%\bibliographystyle{acm}
%\bibliography{generalized+permutahedra.bib}
\printbibliography
\end{document}